\theoremstyle{plain}
\newtheorem{thm}{Theorem}[section]
\newtheorem{lem}[thm]{Lemma}
\newtheorem{cor}[thm]{Corollary}
\theoremstyle{definition}
\newtheorem{definition}[thm]{Definition}
\newtheorem{rem}[thm]{Remark}
\newcommand{\Z}{\mathbb Z}
\begin{document}

\title [\ ] {A property that characterizes Euler characteristic among Invariants
    of combinatorial manifolds}

\author{Li Yu}
\address{Department of Mathematics and IMS, Nanjing University, Nanjing, 210093, P.R.China}
\email{yuli@nju.edu.cn}

\thanks{This work is partially supported by a grant from NSFC
(No.10826040)}

\date{March 10, 2009}

\keywords{Combinatorial manifold, Local formula, Euler
characteristic}

\subjclass[2000]{57R20, 57Q99}

\begin{abstract}
   If a real value invariant of compact combinatorial
   manifolds (with or without boundary) depends only on the number of
   simplices in each dimension on the manifold,
   then the invariant is completely determined by Euler characteristics of
  the manifold and its boundary. So essentially, Euler characteristic
  is the unique invariant of this type.
\end{abstract}

\maketitle

  \section{Introduction}

  \begin{thm} [Roberts 2002, ~\cite{Justin02}] \label{thm:Justin}
   Any linear combination
    of the numbers of simplices which is an invariant of closed combinatorial manifolds
    must be proportional to the Euler characteristic.
    \end{thm}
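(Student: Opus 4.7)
The plan is to reduce invariance of $L = \sum_{i=0}^n a_i f_i$ to a finite linear system by invoking Pachner's theorem, and then to analyze the kernel of that system. Pachner's theorem asserts that any two triangulations of the same closed combinatorial $n$-manifold are connected by a finite sequence of bistellar (Pachner) moves; since $L$ depends on the triangulation only through the $f$-vector, its invariance is equivalent to invariance under each of the $n{+}1$ bistellar move types.

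For each $k \in \{0,1,\ldots,n\}$, I would compute the change in the $f$-vector explicitly. A $k$-th bistellar move takes place inside a copy of $\partial\Delta^{n+1}$: partition its $n{+}2$ vertices as $A \sqcup B$ with $|A|=k{+}1$ and $|B|=n{-}k{+}1$, and replace the closed star of the $k$-face $A$ (which contains $n{-}k{+}1$ top simplices) by the closed star of the $(n{-}k)$-face $B$ (which contains $k{+}1$ top simplices). A direct count of the $i$-dimensional simplices that disappear and appear gives
\[
\Delta_k f_i \;=\; \binom{k+1}{i-n+k} \;-\; \binom{n-k+1}{i-k},
\]
with the convention that $\binom{p}{q}=0$ whenever $q<0$ or $q>p$. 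Plugging in $a_i=(-1)^i$ makes each of the two resulting sums of the form $\sum_j(-1)^j\binom{m}{j}=0$ (for $m\ge 1$), so Euler characteristic is a bistellar invariant, as required.

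The crux of the proof is the converse: show that the system
\[
\sum_{i=0}^n a_i\,\Delta_k f_i \;=\; 0, \qquad k=0,1,\ldots,n,
\]
admits only scalar multiples of $\bigl((-1)^i\bigr)_{i=0}^n$, modulo the Dehn--Sommerville relations (these are identically zero on every closed combinatorial $n$-manifold and therefore contribute nothing to $L$ as a function of the manifold). I would attack the system from its extreme values: the $k=0$ equation involves only $\binom{1}{i-n}$ and $\binom{n+1}{i}$, which collapses to a single constraint tying $a_n$ to the other $a_i$; the $k=n$ equation is inverse to $k=0$ (indeed $\Delta_{n-k}f_i=-\Delta_k f_i$, so only about half of the $n{+}1$ equations are independent), so it gives no new information but constrains $a_0$ symmetrically. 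Propagating inward through $k=1,2,\ldots$ should force the alternating pattern $a_i = c(-1)^i$ up to Dehn--Sommerville corrections.

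The main obstacle I anticipate is precisely this last step. The matrix with rows $\Delta_k f_i$ has a kernel of dimension strictly larger than one, because the Dehn--Sommerville relations lie in it, and one must verify that this is \emph{all} the extra freedom. A cleaner route, which I would attempt if the direct linear algebra becomes unwieldy, is induction on $n$: the link of a vertex in a closed combinatorial $n$-manifold is a closed combinatorial $(n-1)$-sphere, and a bistellar $k$-move restricts on the appropriate link to a bistellar $(k-1)$-move in dimension $n{-}1$, so the one-dimensional conclusion in dimension $n$ should bootstrap from the same statement in dimension $n{-}1$ together with the standard bookkeeping relating $f$-vectors of a manifold to $f$-vectors of vertex links.
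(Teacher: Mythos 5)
The paper does not prove this statement at all: Theorem~\ref{thm:Justin} is quoted from Roberts' paper \cite{Justin02}, so there is no internal proof to compare against, and your attempt has to stand on its own. Your setup is the right one (and is in the spirit of Roberts' argument): by Pachner's theorem, invariance of $L=\sum_i a_i f_i$ reduces to the linear conditions $\sum_i a_i\,\Delta_k f_i=0$ for each move type $k$, and your count $\Delta_k f_i=\binom{k+1}{i-n+k}-\binom{n-k+1}{i-k}$ is correct for your convention (it is the formula for $d_{k,i}$ in Section~\ref{Sec2}, up to relabelling and sign). Two small blemishes: to get the conditions you also need each move type to be realizable on some closed manifold (it is not realizable on $\partial\Delta^{n+1}$ itself, since there $\tau$ is already a face; a construction like the plump cell of Lemma~\ref{Lem:PlumpCell} settles this), and the two alternating sums you obtain for $a_i=(-1)^i$ are truncated binomial sums, each equal to $(-1)^n$ rather than $0$ --- they cancel, so the conclusion that $\chi$ is invariant still holds.

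The genuine gap is precisely the step you yourself flag as ``the main obstacle'': you never determine the kernel of the system, and that determination \emph{is} the theorem. It must be shown that every solution $(a_i)$ is a multiple of $((-1)^i)$ plus a functional vanishing identically on $f$-vectors of closed combinatorial $n$-manifolds; ``propagating inward should force the alternating pattern'' is a hope, not an argument, and the fallback induction over vertex links is not worked out either (a single bistellar move alters many vertex links simultaneously, so a linear functional of $\mathbf{f}(M)$ does not simply restrict to links). The missing step can be closed with exactly the tools in Section~\ref{Sec2} of this paper: the matrix $(d_{k,i})_{0\le k,i\le[\frac{n-1}{2}]}$ is unitriangular, so the move vectors span a subspace of dimension $[\frac{n+1}{2}]$ and your kernel has dimension $n+1-[\frac{n+1}{2}]=[\frac{n}{2}]+1$; on the other hand, Corollary~\ref{Cor:depend} shows the $f$-vectors of closed $n$-manifolds lie in a subspace determined by $f_{-1},f_0,\dots,f_{[\frac{n-1}{2}]}$, so the functionals vanishing identically on all such $f$-vectors, together with the Euler functional $((-1)^i)$, already span a subspace of dimension at least $[\frac{n}{2}]+1$ inside the kernel. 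Comparing dimensions, the kernel equals that subspace, so any invariant linear combination is $c\chi$ plus a functional that is identically zero on closed $n$-manifolds, i.e.\ it is proportional to the Euler characteristic as an invariant. (Alternatively, Lemma~\ref{Lem:Virtual_Move} gives the same conclusion: an $(a_i)$ in the kernel takes equal values on any two closed $n$-manifolds of equal $\chi$, and additivity under disjoint union then pins the resulting function of $\chi$ down to a multiple of $\chi$.) Without some argument of this kind, your write-up establishes only the easy direction.
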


     Then it is natural to ask the following:
     \vskip .2cm

    \textbf{Question 1:}  Can we find any new invariants
    of comibnatorial manifolds independent from Euler characteristic
    among nonlinear functions on the numbers of
    simplices of the manifold?\vskip .2cm

   \begin{thm} [Pachner 1986, ~\cite{Pa86}] \label{thm:Pachner}
     Two closed combinatorial n-manifolds are PL-
homeomorphic if and only if it is possible to move between their
triangulations using a sequence of bistellar moves (Pachner moves)
and simplicial isomorphisms.
  \end{thm}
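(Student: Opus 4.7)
The plan is to prove the two directions of the biconditional separately; the forward implication is essentially formal, while the converse carries the essential content.

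For the forward direction, a bistellar move modifies a triangulation by identifying a subcomplex with one ``half'' of the boundary of an $(n+1)$-simplex and replacing it with the complementary half. Since both halves triangulate the same PL $n$-ball and differ only inside that ball, the move extends to a PL-homeomorphism of the ambient manifold that is the identity outside a small ball. Composing finitely many bistellar moves with simplicial isomorphisms therefore yields a PL-homeomorphism.

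For the converse, I would invoke Alexander's theorem: any two triangulations of a PL manifold admit a common iterated stellar subdivision. The problem thus reduces to showing that a single stellar subdivision of a closed combinatorial manifold can be realized by a finite sequence of bistellar moves. I would proceed by downward induction on the dimension of the simplex $\sigma$ being starred. Starring a top-dimensional simplex is literally a bistellar $0$-move. For lower-dimensional $\sigma$, one works inside $\mathrm{star}(\sigma) = \sigma \ast \mathrm{link}(\sigma)$, applying the inductive hypothesis to the link and then performing local rearrangements inside the star.

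The main obstacle is making this inductive step precise: one must verify that the preparatory modifications of $\mathrm{link}(\sigma)$ can be achieved using only bistellar moves available in the ambient closed manifold, and that local changes inside $\mathrm{star}(\sigma)$ do not conflict with the global triangulation. A geometrically cleaner alternative is via shelling: triangulate the PL concordance $M \times [0,1]$ extending the given triangulations on $M$ and $N$, and observe that attaching one $(n+1)$-simplex along a shellable face corresponds exactly to a bistellar move on the boundary. In that formulation the difficulty shifts to producing a shellable triangulation of the concordance, which is essentially Pachner's central technical achievement.
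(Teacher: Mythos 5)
The paper does not prove this statement at all: it is quoted as a known result of Pachner, so the only fair comparison is with the published proofs (Pachner's original argument via shellings, or Lickorish's later proof via Alexander's stellar theory). Measured against those, your forward direction is fine: the two ``halves'' of $\partial\Delta^{n+1}$ are PL $n$-balls with the same boundary $\partial\sigma * \partial\tau$, so a bistellar move does not change the PL type. But the converse, which is the entire content of the theorem, is left with a genuine gap, and you say so yourself: the inductive step realizing a single starring by bistellar moves is exactly the technical heart of Pachner's and Lickorish's proofs, not a routine verification. Concretely, starring a simplex $\sigma$ replaces $\sigma * \mathrm{link}(\sigma)$ by $\hat{\sigma} * \partial\sigma * \mathrm{link}(\sigma)$, and this is a bistellar move only when $\mathrm{link}(\sigma)$ is the boundary of a simplex; in general one needs a transfer lemma saying that a bistellar equivalence of $\mathrm{link}(\sigma)$ with $\partial\Delta$ can be converted into a chain of bistellar moves of the ambient closed manifold supported in the star (Lickorish does this by a careful induction involving joins and shellable balls). ``Local rearrangements inside the star'' names the problem rather than solving it, and the alternative route through a shellable triangulation of $M\times[0,1]$ likewise defers to Pachner's main construction, as you acknowledge.

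A second, fixable, inaccuracy: what you call Alexander's theorem is misstated. Alexander proved that PL-homeomorphic complexes are \emph{stellar equivalent}, i.e.\ related by a zig-zag of stellar subdivisions and their inverses; the existence of a \emph{common} iterated stellar subdivision is a strictly stronger assertion and is not part of Alexander's theorem (for combinatorially equivalent complexes one only gets a common subdivision, not a common stellar one). For your reduction this does not matter --- since bistellar moves are invertible, stellar equivalence already suffices once starrings are handled --- but as written you are invoking a statement you do not have. So the proposal is a reasonable road map that matches the known proofs in outline, but it does not constitute a proof: the decisive lemma is assumed, not established.
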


   \begin{thm} [Pachner 1991, ~\cite{Pa91}] \label{thm:Pachner_2}
   Two connected combinatorial
    $n$-manifolds with non-empty boundary are PL-homeomorphic
    if and only if they are related by a sequence of
   elementary shellings, inverse shellings and a simplicial
   isomorphism.
  \end{thm}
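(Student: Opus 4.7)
The ``if'' direction is the easy one: an elementary shelling removes a top-dimensional simplex $\sigma$ whose intersection $\sigma \cap \partial M$ is a PL $(n{-}1)$-ball, and the resulting complex is a strong deformation retract of $M$ that is itself a combinatorial $n$-manifold, so the two are PL-homeomorphic. Composing elementary shellings, inverse shellings, and simplicial isomorphisms therefore preserves PL-homeomorphism type.

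For the ``only if'' direction, my approach is to reduce to Theorem~\ref{thm:Pachner} via doubling. Given PL-homeomorphic connected combinatorial $n$-manifolds $M$ and $N$ with non-empty boundary, form the doubles $DM = M \cup_{\partial M} M$ and $DN = N \cup_{\partial N} N$. These are closed combinatorial $n$-manifolds, and the PL-homeomorphism between $M$ and $N$ doubles to a PL-homeomorphism between $DM$ and $DN$. By Theorem~\ref{thm:Pachner}, $DM$ and $DN$ are related by a finite sequence of bistellar moves and a simplicial isomorphism. The problem reduces to translating this sequence into a sequence of elementary shellings and inverse shellings on $M$.

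The core of the argument is the following translation lemma. After a preliminary refinement, each bistellar move on the double may be replaced by a $\Z/2$-equivariant one with respect to the involution swapping the two copies of $M$; a non-equivariant move is handled by appending its mirror. An equivariant move supported near the fold $\partial M$ descends directly to an elementary shelling or inverse shelling on $M$. An equivariant move supported in the interior of one copy (together with its mirror) is realized by first applying a finite sequence of inverse shellings to enlarge $\partial M$ into a PL collar of the affected region, then executing the modification by further shellings on the enlarged boundary, and finally restoring the original boundary by inverse shellings. The main obstacle is controlling this ``excavation and refilling'' step so that every intermediate complex is a genuine combinatorial manifold and every removal or addition qualifies as an elementary shelling; this runs into the subtlety that not every PL $n$-ball is shellable, so one must first produce a shellable refinement of the neighborhood one wishes to excavate. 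Supplying such auxiliary lemmas on stellar subdivisions and shellable refinements, and checking that their combined effect sits inside the class of moves allowed by the theorem, is where I expect the real technical work to lie.
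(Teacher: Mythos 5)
This statement is Pachner's 1991 theorem, which the paper only cites; there is no proof in the paper to compare against, so your plan has to stand on its own, and as it stands it has a genuine gap. The ``only if'' direction is reduced to a ``translation lemma'' that is never proved and that in fact carries the entire difficulty of the theorem. Concretely: (a) the equivariance reduction is not available. Theorem~\ref{thm:Pachner} gives you \emph{some} sequence of bistellar moves from $DM$ to $DN$, with no control relative to the involution; after the first non-equivariant move the intermediate complex is no longer a double, so there is no distinguished copy of $M$ inside it, ``appending the mirror move'' is not even defined, and the claim that one can rearrange the sequence into $\Z/2$-equivariant blocks is an unproven (and far from obvious) assertion, not a preliminary refinement. (b) Even granting an equivariant move whose support meets the fold $\partial M$, its restriction to $M$ is a local modification of the pair $(M,\partial M)$ that is not literally the removal or addition of a single $n$-simplex $\sigma * \tau$ with $\sigma * \partial\tau \subset \partial M$; you assert it ``descends directly to an elementary shelling or inverse shelling'' without argument. (c) The ``excavation and refilling'' step --- realizing an interior bistellar move by a sequence of elementary shellings and inverse shellings through genuine combinatorial manifolds --- is precisely the technical heart of Pachner's proof (it is where stellar subdivision theory and shellability of the relevant balls enter), and you explicitly defer it to unspecified auxiliary lemmas after correctly noting that not every PL ball is shellable. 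Deferring exactly the step where the theorem's content lives means the proposal is a plausible outline, not a proof.

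A smaller point on the ``if'' direction: a strong deformation retraction does not by itself give a PL homeomorphism. The correct statement is that when $\sigma * \tau$ is an $n$-simplex with $\tau \cap \partial M = \partial \tau$ and $\sigma * \partial\tau \subset \partial M$, the closure of $M \setminus (\sigma * \tau)$ is obtained by pushing in along a ball in the boundary (a collapsing/regular-neighborhood argument), and this is what yields a PL homeomorphism; with that standard fix the easy direction is fine.
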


   Here, $PL$ is an abbreviation for \textit{piecewise linear}.
   Pachner's theorems suggest that we can look for quantities invariant
   under the bistellar moves and elementary shellings
   so as to obtain invariants of combinatorial manifolds under $PL$ isomorphisms.
   However, we will give a negative answer to Question 1 by showing
   the following.
 \vskip .2cm

 \begin{thm} \label{thm:main}
        Suppose $\Phi$ is a real value invariant of closed combinatorial manifolds
        (under PL isomorphisms) that only depends on the number of simplices in each dimension on the
        manifold. If two closed combinatorial
        $n$-manifolds $(M^n_1,\xi_1)$ and $(M_2^n, \xi_2)$ have the same
        Euler characteristic, $\Phi(M^n_1,\xi_1) = \Phi(M^n_2,\xi_2)$.
 \end{thm}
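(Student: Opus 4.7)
The plan is to produce, for any two closed combinatorial $n$-manifolds $(M_1, \xi_1)$ and $(M_2, \xi_2)$ with $\chi(M_1) = \chi(M_2)$, alternative triangulations $\xi_1'$ of $M_1$ and $\xi_2'$ of $M_2$ whose face vectors coincide. Once such $\xi_i'$ are in hand, the hypothesis that $\Phi$ is determined by the face numbers forces $\Phi(M_1, \xi_1') = \Phi(M_2, \xi_2')$, while Pachner's Theorem~\ref{thm:Pachner} together with the $PL$-invariance of $\Phi$ ensures $\Phi(M_i, \xi_i) = \Phi(M_i, \xi_i')$ for $i = 1, 2$; chaining these two identifications yields the desired conclusion. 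Note that this strategy is essential precisely because $M_1$ and $M_2$ need not be $PL$-homeomorphic, so bistellar moves alone cannot bridge them; what bridges them is the hypothesis that $\Phi$ sees only the face vector.

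The key combinatorial input is that a bistellar $k$-move ($0 \le k \le n$) changes the face vector $(f_0, \ldots, f_n)$ by a fixed integer vector $\Delta_k \in \Z^{n+1}$ depending only on $k$ and $n$, not on the simplex at which the move is performed. Because a bistellar move yields a $PL$-homeomorphic complex, each $\Delta_k$ lies in the kernel of the linear form $\chi(f) = \sum_{i=0}^n (-1)^i f_i$. Roberts' Theorem~\ref{thm:Justin} asserts that these $\Delta_k$ already span $\ker(\chi) \otimes \Q$, so the lattice $L := \langle \Delta_0, \ldots, \Delta_n\rangle_\Z$ has full rank inside $\ker(\chi) \cap \Z^{n+1}$. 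The plan is then to show that for any two closed combinatorial $n$-manifolds of equal Euler characteristic the difference of their face vectors lies in $L$; once that is done, applying a suitable sequence of bistellar moves (forward and inverse) to $\xi_1$ and to $\xi_2$ transports both face vectors to a common value. To ensure that the prescribed local moves are realizable, $\xi_1$ and $\xi_2$ are first refined by enough iterated stellar subdivisions so every required local bistellar configuration appears.

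The main obstacle is the coset-equality step, since Roberts' theorem only supplies the rational span. To overcome it, one computes each $\Delta_k$ explicitly and identifies $L$ with the sublattice $R \subset \ker(\chi) \cap \Z^{n+1}$ cut out by the Dehn--Sommerville-type relations that face vectors of closed combinatorial $n$-manifolds are forced to satisfy. For instance, in dimension $n = 3$ the identity $f_2 = 2f_3$ confines realizable face-vector differences to a rank-$2$ sublattice, and a direct check shows $L$ is precisely that sublattice; an analogous verification in general dimensions is the main technical work. A secondary obstacle, namely the local realizability of specified bistellar moves on a given triangulation, is routine once one passes to a sufficiently fine subdivision. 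With $L = R$ established and realizability secured, the theorem reduces to a direct application of Pachner's theorem together with the face-vector hypothesis on $\Phi$.
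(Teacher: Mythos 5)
Your overall architecture matches the paper's: equalize the face vectors of suitable re-triangulations by realizing a prescribed integer combination of bistellar moves, then invoke PL-invariance of $\Phi$ (via Pachner) together with the hypothesis that $\Phi$ sees only the $\mathbf{f}$-vector. But the proposal has a genuine gap exactly at the step that carries the weight of the theorem: the claim that the difference of the two face vectors lies in the integer lattice $L$ generated by the move vectors. You only verify this in dimension $3$ and explicitly defer the general case as ``the main technical work,'' so the proof is not complete. The paper closes precisely this gap (Lemma~\ref{Lem:Virtual_Move}): using the relations $\mathbf{d}_{n-i}=-\mathbf{d}_i$ and the Dehn--Sommerville equations (Corollary~\ref{Cor:depend}, which shows the top half of the $\mathbf{f}$-vector is determined by the bottom half once the Euler characteristics agree), the full system of $n+1$ equations reduces to a $[\frac{n+1}{2}]\times[\frac{n+1}{2}]$ system whose coefficient matrix $(d_{k,i})_{0\le k,i\le[\frac{n-1}{2}]}$ satisfies $d_{k,i}=\binom{n+1-i}{k-i}$, hence is triangular with $1$'s on the diagonal and invertible over $\Z$; this is what produces an integer solution in every dimension. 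Relatedly, your appeal to Roberts' theorem is both incorrect and unnecessary: the vectors $\Delta_k$ do \emph{not} span $\ker(\chi)\otimes\Q$ (they satisfy $\Delta_{n-k}=-\Delta_k$, so their span has dimension at most $[\frac{n+1}{2}]$, e.g.\ for $n=2$ it is the line through $(1,3,2)$ inside the $2$-dimensional $\ker\chi$), and Roberts' result concerns linear invariants evaluated on manifold face vectors, not a spanning statement in $\ker\chi$.

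A secondary gap is realizability. You assert that after a ``sufficiently fine'' stellar refinement every required bistellar configuration appears; this is not automatic. A bistellar $i$-move with $i>0$ requires an $(n-i)$-simplex whose link is the boundary of an $i$-simplex that is \emph{not} a face of the complex, and an arbitrary fine triangulation need not contain such configurations for all $i$, let alone $N$ disjoint ones. The paper handles this constructively: it builds a ``plump cell'' $K^n$ (Lemma~\ref{Lem:PlumpCell}) containing, away from its boundary, one configuration for each move type, and then replaces $N$ top simplices of each manifold by copies of $K^n$ --- doing the \emph{same} preparatory operations on both manifolds so that the difference of $\mathbf{f}$-vectors, and hence the list of required virtual moves, is unchanged. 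Your sketch would need an analogous explicit construction (or an argument that the needed configurations exist after refinement, performed compatibly on both triangulations) before the reduction to Pachner's theorem goes through.
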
 \vskip .2cm

   So essentially there is a unique $PL$-invariant of closed combinatorial
  manifolds whose value depends only on the number of simplices in
  the manifold ----- Euler characteristic.
  And since we have combinatorial manifolds with the same dimension and
  Euler characteristic but
   different rational Pontryagin numbers, so there could not be any formulae for
   rational Pontryagin numbers that depend only on the number of simplices
   in a combinatorial manifold. Furthermore,
   Theorem~\ref{thm:main} can be extended to
   combinatorial manifolds with nonempty boundaries as following.
  \vskip .2cm

  \begin{thm} \label{thm:main2}
        Suppose $\Phi$ is a real value invariant of compact
        combinatorial manifolds (under PL isomorphisms)
        that only depends on the number of simplices in each dimension on the
        manifold. If two compact combinatorial
        $n$-manifolds $(M^n_1,\xi_1)$ and $(M_2^n, \xi_2)$ with nonempty boundary
        have the same Euler characteristic
        $\chi(M^n_1) = \chi(M^n_2)$ and $\chi(\partial M^n_1) = \chi (\partial M^n_2)$,
        then $\Phi(M^n_1,\xi_1) = \Phi(M^n_2,\xi_2)$.
 \end{thm}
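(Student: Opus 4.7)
My plan is to reduce the bounded case to the closed case (Theorem~\ref{thm:main}) by producing triangulations $\xi_1', \xi_2'$ of $M_1, M_2$ with identical $f$-vectors. Combined with Theorem~\ref{thm:Pachner_2}, which forces $\Phi$ to be constant across all triangulations within a fixed PL class, this immediately yields $\Phi(M_1,\xi_1) = \Phi(M_1,\xi_1') = \Phi(M_2,\xi_2') = \Phi(M_2,\xi_2)$. The matching is accomplished in two stages: first the induced $f$-vectors on the boundaries are brought into agreement, then the remaining interior contributions are matched.

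For the boundary stage, observe that an inverse shelling of type $k \in \{1,\dots,n\}$ --- attaching an $n$-simplex $\sigma = \tau \ast \tau^c$ along the union of $k$ facets dual to the $(k-1)$-simplex $\tau$ --- modifies $\partial M$ by swapping the $(n-1)$-disk $\partial \tau \ast \tau^c$ for the complementary disk $\tau \ast \partial \tau^c$ along their common $(n-2)$-sphere. This is precisely a bistellar $(k-1)$-move on $\partial M$ viewed as a closed $(n-1)$-manifold. Together with the hypothesis $\chi(\partial M_1) = \chi(\partial M_2)$ and the lattice computation from the closed case applied in dimension $n-1$ --- which shows the bistellar change vectors span the codimension-$1$ sublattice of $\mathbb{Z}^n$ cut out by $\chi$ --- we find sequences of inverse shellings on $M_1, M_2$ producing triangulations $\xi_1', \xi_2'$ whose induced boundary $f$-vectors coincide.

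For the interior stage, observe that inverse shellings of type $k \le n$ preserve the PL type of $M$ (they attach a ``flap'' of interior along a proper disk in $\partial M$), so $\chi(M_1) = \chi(M_2)$ remains intact after the boundary stage. Combined with the matched boundary $f$-vectors, the residual difference $f(\xi_1') - f(\xi_2') \in \mathbb{Z}^{n+1}$ lies in the codimension-$1$ sublattice cut out by $\chi$ and has zero boundary component. Interior bistellar moves --- performed strictly inside $M$ and preserving $\partial M$ entirely --- change $f(M)$ by the same vectors $b_0, \dots, b_{n-1}$ used in the closed-case analysis in dimension $n$, which span the required codimension-$1$ sublattice. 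Applying them closes the residual gap and produces $\xi_1'', \xi_2''$ with $f(\xi_1'') = f(\xi_2'')$.

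The main obstacle is realizability: each abstract $\mathbb{Z}$-combination of shelling and interior bistellar change vectors must be realized by an executable sequence of moves on concrete triangulations, with every intermediate complex remaining a combinatorial manifold. I expect this to be handled, as in the closed case, by first passing to sufficiently fine subdivisions of $\xi_1$ and $\xi_2$ so that the required local configurations --- the appropriate disks on $\partial M$ for the shellings and bistellar stars in the interior for the interior moves --- are always available.
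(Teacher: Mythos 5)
Your two-stage outline (first match the boundary $\mathbf f$-vectors by shellings/inverse shellings that induce bistellar moves on $\partial M$, then close the remaining gap by bistellar moves performed strictly in the interior) is essentially the paper's strategy, but the justification of your interior stage has a genuine gap. You assert that the interior bistellar change vectors ``span the required codimension-$1$ sublattice cut out by $\chi$''; this is false. In dimension $n$ the change vectors satisfy $d_{k,n-i}=-d_{k,i}$ (and $d_{k,n/2}=0$ for $n$ even), so they span a sublattice of rank only $[\frac{n+1}{2}]$, far smaller than the rank-$n$ lattice cut out by the Euler relation; for $n=2$, for instance, they span the rank-one lattice generated by $(1,3,2)$. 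Even in the closed case the point is not that the move vectors span the $\chi$-hyperplane, but that the difference of two $\mathbf f$-vectors lies in their span because both vectors satisfy the same Dehn--Sommerville system. In your interior stage $M^n_i$ is not closed, so the closed-case lattice computation does not apply as quoted: knowing that $f(\xi_1')-f(\xi_2')$ has alternating sum zero and that the boundary $\mathbf f$-vectors agree does not by itself place this difference in the span of the interior move vectors. The missing ingredient is the Dehn--Sommerville relations for manifolds with boundary, which the paper obtains by doubling: $f_i(M)-\frac{1}{2}f_i(\partial M)=\frac{1}{2}f_i(DM)$ obeys the closed relations, and since both $\chi(M)$ and $\chi(\partial M)$ are matched, the quantity $\widehat\chi=\chi(M)-\frac{1}{2}\chi(\partial M)$ agrees for the two manifolds; the unitriangular closed-case argument is then applied to $\widehat{\mathbf f}=\mathbf f(M)-\frac{1}{2}\mathbf f(\partial M)$, whose difference equals $f(\xi_1')-f(\xi_2')$ once the boundaries are matched. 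Without this step your interior stage does not go through.

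A secondary point: the realizability issue you flag at the end is a substantive part of the proof, not a routine check. The paper resolves it with explicit gadgets: a ``plump cell'' $K^{n-1}$ (a PL disk whose boundary is a simplex boundary and which contains one available bistellar move of every type, none of which touches the boundary stars) and a ``mold cell'' $U^n$ (a PL $n$-disk meeting $\partial M$ in a copy of $K^{n-1}$, in which an elementary shelling of each type is available and induces exactly the corresponding bistellar move inside $K^{n-1}$ while fixing the rest of the boundary). Your ``sufficiently fine subdivision'' expectation is in the right spirit --- the paper creates many one-face-exposed $n$-simplices by star subdivisions along boundary faces and converts them into mold cells --- but as written it is a hope rather than an argument. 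The same mischaracterization of the closed-case computation also appears in your boundary stage; there it is harmless, since the closed-case lemma you invoke in dimension $n-1$ with $\chi(\partial M^n_1)=\chi(\partial M^n_2)$ is correct, but its proof rests on Dehn--Sommerville, not on a spanning statement for the $\chi$-hyperplane.
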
 \vskip .2cm

  \begin{rem}
    A real value invariant $\Phi$ of closed combinatorial manifolds
      as in Theorem~\ref{thm:main} may not admit any local formula
      (see~\cite{YuLi_Euler_1}).
        So the results in this paper and the results in~\cite{YuLi_Euler_1} are not overlapping.
  \end{rem}

  The paper is organized as follows. In Section~\ref{Sec2}, we
  briefly discuss some basic properties of $\mathbf{f}$-vectors of combinatorial
  manifolds and understand the change of
  the $\mathbf{f}$-vectors under bistellar moves and elementary shellings.
   Then in Section~\ref{Sec3} and Section~\ref{Sec4}, we present a proof of Theorem~\ref{thm:main}
  and Theorem~\ref{thm:main2} respectively.
 \\

  \section{Dehn-Sommerville Equations, Bistellar move and Elementary shelling}
   \label{Sec2}

   We first recall some definitions in combinatorial topology (see ~\cite{Brond83}
   and~\cite{RourSand74}).\vskip
   .2cm

     For a topological $n$-manifold $M^n$,
       let $\Xi(M^n)$ be the set of
        all isomorphism classes of combinatorial structures on $M^n$.
        For any $\xi \in \Xi(M^n)$, $(M^n,\xi)$ is called a
        combinatorial manifold.
        In this paper, we only consider those manifolds which admit combinatorial
        manifold structures. If $M^n$ has nonempty boundary, then we
        use $(\partial M^n, \partial \xi)$ to denote the restriction of $\xi$
        on $\partial M^n$.\vskip .2cm

        Let $f_k(M^n,\xi)$ be the number of $k$-simplices in $\xi$. We
        call the following integral vector $\mathbf{f}$\textbf{-vector} of the combinatorial
        manifold $(M^n,\xi)$.
        \[  \mathbf{f}(M^n, \xi) = (f_0(M^n,\xi), \cdots, f_n(M^n,\xi) ) \in \Z^{n+1}_{+} \]
          In addition, we define $f_{-1}(M^n, \xi) := \frac{1}{2}
         \chi(M^n)$, where $\chi(M^n)$ is the Euler
         characteristic of $M^n$.\vskip .2cm

         Following are some well-known facts on the $\mathbf{f}$-vectors of
  combinatorial manifolds (see~\cite{ChenYan97} and ~\cite{Brond83}).

     \begin{thm} [Dehn-Sommerville Equations]
      Suppose $(M^n, \xi)$ is an $n$-dimensional closed combinatorial manifold. Then we have
    \begin{equation} \label{Equ:DehnSome_1}
      f_i(M^n, \xi) = \sum^n_{j=i} (-1)^{n-j} \binom{j+1}{i+1} \cdot f_j(M^n,
      \xi),\quad -1\leq i\leq n
    \end{equation}
    If $(M^n, \xi)$ is an $n$-dimensional compact combinatorial manifold
    with boundary, we have:
   \begin{equation} \label{Equ:DehnSome_2}
      f_i(M^n, \xi) - f_i(\partial M^n, \partial\xi) =
       \sum^n_{j=i} (-1)^{n-j} \binom{j+1}{i+1} \cdot f_j(M^n, \xi)
    \end{equation}
     \end{thm}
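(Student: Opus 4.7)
The plan is to derive both identities from a single double count of incidences between $i$-simplices and $j$-simplices, translating one side into Euler characteristics of links and then exploiting the local structure of a combinatorial manifold (spheres in the interior, balls along the boundary).

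Fix $-1\leq i<n$. First I would form the alternating sum, over $j>i$, of pairs $(\sigma,\tau)$ where $\sigma$ is an $i$-simplex that is a face of a $j$-simplex $\tau$. Counting from the $\tau$-side is immediate: each $j$-simplex has $\binom{j+1}{i+1}$ faces of dimension $i$, which gives $\sum_{j=i+1}^{n}(-1)^{j-i-1}\binom{j+1}{i+1}f_j$. Counting from the $\sigma$-side uses the fact that the $j$-simplices of $\xi$ containing a fixed $i$-simplex $\sigma$ are in bijection with the $(j-i-1)$-simplices of the link $\mathrm{lk}(\sigma,\xi)$; reindexing by $k=j-i-1$, the $\sigma$-side contribution collapses to $\chi(\mathrm{lk}(\sigma,\xi))$.

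In the closed case, the link of every $i$-simplex is a combinatorial $(n-i-1)$-sphere, so $\chi(\mathrm{lk}(\sigma,\xi))=1+(-1)^{n-i-1}$. Summing over the $f_i$ many $i$-simplices, moving the diagonal $j=i$ contribution $(-1)^{n-i}f_i$ back into the right-hand sum, and applying the convention $f_{-1}(M^n,\xi)=\chi(M^n)/2$ when $i=-1$, yields equation (\ref{Equ:DehnSome_1}). When $n-i$ is even both sides collapse to $0$, and the instance $i=-1$ simultaneously reproduces the Euler characteristic formula and forces $\chi(M^n)=0$ in odd dimensions. For the bounded case I would partition the $i$-simplices into interior ones and those lying in $\partial M^n$: the interior links are still $(n-i-1)$-spheres, whereas the link of a boundary $i$-simplex is a combinatorial $(n-i-1)$-ball of Euler characteristic $1$. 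Writing $f_i=f_i^{\mathrm{int}}+f_i(\partial M^n,\partial\xi)$, the $\sigma$-side sum becomes $(1+(-1)^{n-i-1})f_i-(-1)^{n-i-1}f_i(\partial M^n,\partial\xi)$; equating this with the $\tau$-side and performing the same rearrangement as in the closed case, the boundary correction $(-1)^{n-i-1}f_i(\partial M^n,\partial\xi)$ is exactly what turns $f_i$ into $f_i-f_i(\partial M^n,\partial\xi)$ on the left, producing equation (\ref{Equ:DehnSome_2}).

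The main obstacle will be the sign bookkeeping in the bounded case: one must carefully distinguish the link of a boundary simplex taken inside $\xi$ (a ball) from its trace inside $\partial\xi$ (that ball's boundary sphere), and then verify that both parities of $n-i$ assemble into the single uniform identity (\ref{Equ:DehnSome_2}). Once the parity check is done cleanly, the formulas fall out of the same double-counting identity used in the closed case, and classical consistency checks such as $\chi(\partial M^{2k+1})=2\chi(M^{2k+1})$ can be read off from the $i=-1$ instance of (\ref{Equ:DehnSome_2}).
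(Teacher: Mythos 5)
Your double count is the classical link-counting proof (Klee's argument), and for $0\le i\le n-1$ it is correct as you set it up: counting incident pairs from the $\tau$-side gives $\sum_{j>i}(-1)^{j-i-1}\binom{j+1}{i+1}f_j$, counting from the $\sigma$-side gives $\sum_\sigma\chi(\mathrm{lk}(\sigma,\xi))$, and using that interior links are PL $(n-i-1)$-spheres while links of boundary simplices are PL $(n-i-1)$-balls, multiplying by $(-1)^{n-i-1}$ and absorbing the diagonal term $j=i$ yields exactly \eqref{Equ:DehnSome_1} and \eqref{Equ:DehnSome_2}; the case $i=n$ is a tautology. Note that the paper itself gives no proof of this theorem at all --- it quotes the equations as well-known facts with references to Chen--Yan and Br\o ndsted --- so your write-up supplies an argument the paper delegates to the literature.

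The one genuine gap is the case $i=-1$, which you fold into the same count. The unique $(-1)$-simplex is the empty face, whose link is all of $(M^n,\xi)$, so the assertion ``the link of every $i$-simplex is a combinatorial $(n-i-1)$-sphere'' fails there and the double count degenerates to the tautology $\chi(M^n)=\chi(M^n)$. With the convention $f_{-1}=\tfrac12\chi$, the $i=-1$ instance of \eqref{Equ:DehnSome_1} is vacuous for $n$ even but for $n$ odd it \emph{is} the statement $\chi(M^n)=0$, and the $i=-1$ instance of \eqref{Equ:DehnSome_2} is the statement $\chi(\partial M^n)=0$ ($n$ even) or $\chi(\partial M^n)=2\chi(M^n)$ ($n$ odd); these cannot be ``read off'' from the equations, since they are precisely what must be proved to obtain the equations at $i=-1$. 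The repair is short: sum the already-established relations over $0\le i\le n$ with weight $(-1)^i$ and use
\begin{equation*}
\sum_{i=0}^{j}(-1)^i\binom{j+1}{i+1}=1 ,
\end{equation*}
which gives $\chi(M^n)=(-1)^n\chi(M^n)$ in the closed case and $\chi(M^n)-\chi(\partial M^n)=(-1)^n\chi(M^n)$ in the bounded case, i.e.\ exactly the missing facts (alternatively, invoke mod $2$ Poincar\'e duality). With that one supplement your proof is complete.
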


      Note that setting $k =-1$, Dehn-Sommerville
   Equation~\eqref{Equ:DehnSome_1} yields the Euler formula for
   $M^n$. So when two closed combinatorial manifolds have the same Euler characteristic,
   their $\mathbf{f}$-vectors are solutions of exactly the same system of
   Dehn-Sommerville equations.\vskip .2cm

     \begin{cor} \label{Cor:depend}
      Suppose $(M^n, \xi)$ is an $n$-dimensional closed combinatorial
      manifold, then $f_{[\frac{n+1}{2}]}(M^n, \xi), \cdots , f_{n}(M^n,\xi)$ are completely
      decided by $f_{-1}(M^n,\xi), f_0(M^n,\xi), \cdots, f_{[\frac{n-1}{2}]}(M^n,\xi)$.
      Indeed, there are some universal constant $q_{ij} \in \Z $
      such that
      \[ f_i(M^n,\xi) = \sum^{[\frac{n-1}{2}]}_{j=-1} q_{ij} f_j(M^n,\xi),\quad  \,
         \left[ \frac{n+1}{2} \right] \leq i \leq n \]
     \end{cor}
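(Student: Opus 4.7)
The strategy is to reduce the statement to a linear-algebra fact about the Dehn-Sommerville system \eqref{Equ:DehnSome_1}, viewed as $n+2$ linear equations in the $n+2$ variables $f_{-1},\dots,f_n$. Writing $m=[n/2]$, I would restrict attention to the subsystem obtained from the equations indexed by $i\in\{-1,0,\dots,m-1\}$, treat $f_{-1},\dots,f_{[(n-1)/2]}$ as parameters, and regard $f_{[(n+1)/2]},\dots,f_n$ as the $m+1$ unknowns. Moving every term $(-1)^{n-j}\binom{j+1}{i+1}f_j$ with $j\le[(n-1)/2]$ to the right-hand side turns each selected equation into a relation whose left-hand side is a linear form in the upper variables with integer coefficients and whose right-hand side is an integer linear combination of the lower variables.

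The coefficient matrix $A$ of this $(m+1)\times(m+1)$ system has entries $A_{ij}=(-1)^{n-j}\binom{j+1}{i+1}$. After factoring out the column signs $(-1)^{n-j}$, what remains is a matrix of the form $\bigl(\binom{c+k}{a}\bigr)_{0\le a,k\le m}$ with the reindexing $a=i+1$, $k=j-[(n+1)/2]$, and $c=[(n+1)/2]+1$. A short induction on $m$ using Pascal's identity $\binom{c+k}{a}-\binom{c+k-1}{a}=\binom{c+k-1}{a-1}$ shows that this binomial determinant equals $1$: the column operations $C_k\mapsto C_k-C_{k-1}$ performed in the order $k=m,m-1,\dots,1$ turn the top row into $(1,0,\dots,0)$ while reducing the rest of the matrix to a binomial matrix of the same form with $m$ replaced by $m-1$, and expanding along the top row completes the inductive step. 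Consequently $\det A=\pm 1$, the matrix $A$ is unimodular, and $A^{-1}$ has integer entries; applying $A^{-1}$ to the right-hand sides produces the promised integers $q_{ij}$ with $f_i=\sum_{j=-1}^{[(n-1)/2]}q_{ij}f_j$ for all $i\ge[(n+1)/2]$.

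The main obstacle I anticipate is the bookkeeping of the signs $(-1)^{n-j}$ and the parity-dependent value of $c$ when passing to the clean form $\bigl(\binom{c+k}{a}\bigr)$, together with a consistency check that the discarded Dehn-Sommerville equations $E_i$ with $i\ge m$ do not impose extra constraints on the upper variables beyond those already captured by the chosen subsystem. The latter is automatic, since the full Dehn-Sommerville system for a closed combinatorial $n$-manifold is known to have rank exactly $m+1$, so the $m+1$ chosen equations already form a maximal independent subsystem and the remaining equations are linear consequences of them.
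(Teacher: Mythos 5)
Your proposal is correct and follows essentially the same route as the paper, which simply defers to the standard Dehn--Sommerville argument for simplicial polytope boundaries (Br{\o}ndsted, Ch.~17): select a square subsystem of the relations, observe that the coefficient matrix on the upper half of the $\mathbf{f}$-vector is, up to signs, a binomial matrix of determinant $1$, and invert it over $\Z$ to get the universal integer constants $q_{ij}$. Your closing remark about the rank of the full system is unnecessary (the corollary only needs that the chosen subsystem holds for the actual $\mathbf{f}$-vector), but it does no harm.
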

     \begin{proof}
        The proof is parallel to that for the boundary of
         simplicial polytopes
        (see~\cite{Brond83} chapter 17 for details).
     \end{proof} \vskip .2cm

      \begin{cor} \label{Cor:depend_2}
      Suppose $(M^n, \xi)$ is an $n$-dimensional combinatorial
      manifold with boundary, then each
      $$ f_i(M^n, \xi)- \frac{1}{2} f_i(\partial M^n, \partial\xi),\
         \left[\frac{n+1}{2}\right]  \leq i \leq n $$
        is completely
      decided by the $[\frac{n+1}{2}]$ numbers:
      \[  f_j(M^n, \xi)- \frac{1}{2} f_j(\partial M^n, \partial\xi), \quad  -1\leq j \leq
      \left[\frac{n-1}{2}\right]. \]
     \end{cor}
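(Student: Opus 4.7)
The plan is to define
\[
g_i := f_i(M^n, \xi) - \tfrac{1}{2}\, f_i(\partial M^n, \partial \xi), \qquad -1 \le i \le n,
\]
and show that the sequence $(g_{-1}, g_0, \dots, g_n)$ satisfies a system of linear identities \emph{formally identical} to the Dehn--Sommerville equations \eqref{Equ:DehnSome_1} for a closed combinatorial $n$-manifold. Once that is in hand, Corollary~\ref{Cor:depend} applies verbatim to the $g_i$'s and produces the desired dependence.

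To derive the identities, I would start from \eqref{Equ:DehnSome_2} and rearrange so that $g_i$ appears on the left, then substitute $f_j(M^n,\xi) = g_j + \tfrac{1}{2}\, f_j(\partial M^n, \partial\xi)$ on the right. Since $(\partial M^n, \partial \xi)$ is itself a closed combinatorial $(n-1)$-manifold, its own Dehn--Sommerville relations \eqref{Equ:DehnSome_1} (with $n$ replaced by $n-1$) can be used to eliminate the stray $f_i(\partial M^n, \partial \xi)$ term. The crucial bookkeeping point is the sign discrepancy $(-1)^{(n-1)-j} = -(-1)^{n-j}$, which causes \emph{all} boundary $\mathbf{f}$-vector contributions to cancel; and since $\dim \partial M^n = n-1$ we have $f_n(\partial M^n, \partial\xi) = 0$, so no top-dimensional term is left over to spoil the cancellation. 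What remains is the clean identity
\[
g_i = \sum_{j=i}^{n} (-1)^{n-j} \binom{j+1}{i+1}\, g_j, \qquad -1 \le i \le n,
\]
which has the same shape as \eqref{Equ:DehnSome_1} with $g_j$ in place of $f_j$.

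With this established, the conclusion follows by applying the argument from Corollary~\ref{Cor:depend} (the standard linear-algebra reduction used for the boundary of a simplicial polytope in \cite{Brond83}) to the sequence $(g_{-1},\dots,g_n)$. The same universal integer coefficients $q_{ij}$ that express the upper half of a closed $n$-manifold's $\mathbf{f}$-vector in terms of its lower half now express the upper-half $g_i$'s in terms of the lower-half $g_j$'s, which is exactly the claim.

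The only real obstacle is the sign bookkeeping in the cancellation step; everything else is mechanical substitution or a direct quotation of Corollary~\ref{Cor:depend}. In particular, no new combinatorial input is needed beyond the fact that $(\partial M^n, \partial\xi)$ is itself closed, so that both families of Dehn--Sommerville equations are simultaneously available.
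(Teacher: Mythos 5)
Your argument is correct, but it takes a genuinely different route from the paper. The paper's proof is a one-line geometric reduction: it observes that $f_i(M^n,\xi)-\tfrac12 f_i(\partial M^n,\partial\xi)=\tfrac12 f_i(DM^n,D\xi)$, where $(DM^n,D\xi)$ is the double of $(M^n,\xi)$, and then quotes Corollary~\ref{Cor:depend} for the closed manifold $DM^n$. You instead stay entirely algebraic: you combine equation~\eqref{Equ:DehnSome_2} with the closed-case equations~\eqref{Equ:DehnSome_1} for $(\partial M^n,\partial\xi)$ to show that $g_i:=f_i(M^n,\xi)-\tfrac12 f_i(\partial M^n,\partial\xi)$ satisfies the closed-type Dehn--Sommerville system, and then reapply the linear-algebra reduction behind Corollary~\ref{Cor:depend} to the abstract sequence $(g_{-1},\dots,g_n)$. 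I checked your cancellation: since $f_n(\partial M^n,\partial\xi)=0$ and the boundary relations give $\sum_{j\ge i}(-1)^{n-j}\binom{j+1}{i+1}f_j(\partial M^n,\partial\xi)=-f_i(\partial M^n,\partial\xi)$, the boundary terms do not literally cancel to zero but recombine to exactly the $-\tfrac12 f_i(\partial M^n,\partial\xi)$ needed on the left, yielding $g_i=\sum_{j=i}^n(-1)^{n-j}\binom{j+1}{i+1}g_j$; this also holds at $i=-1$ (which you need, since $g_{-1}$ only enters through that equation), using \eqref{Equ:DehnSome_2} at $i=-1$ together with $\chi(\partial M^n)=(1-(-1)^n)\chi(M^n)$ under the convention $f_{-1}=\tfrac12\chi$. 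What each approach buys: the doubling trick is shorter and explains conceptually why the combination $f_i-\tfrac12 f_i(\partial)$ is the right one, at the cost of invoking the standard but nontrivial fact that the double is again a combinatorial manifold; your derivation needs no auxiliary construction and makes explicit that the $g_i$ obey the same universal relations with the same coefficients $q_{ij}$, but it does require noting that Corollary~\ref{Cor:depend} is being applied to a sequence satisfying the Dehn--Sommerville system rather than to an actual manifold --- which is legitimate because its proof uses only those equations, as you say.
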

     \begin{proof}
          Notice that $f_i(M^n, \xi)- \frac{1}{2} f_i(\partial M^n,
          \partial\xi) = \frac{1}{2} f_i (DM^n, D\xi)$ where $(DM^n, D\xi)$ is the
          gluing of two copy of $(M^n,\xi)$ along their boundaries.
          Then the result here follows from Corollary~\ref{Cor:depend} above.
     \end{proof} \vskip .2cm

     \begin{definition}
        Let $(M^n,\xi)$ be a combinatorial $n$-manifold and $\sigma \in
        \xi$ an $(n-i)$-simplex ($0\leq i \leq n$) such that its link
        in $\xi$ is the boundary $\partial\tau$ of an
        $i$-simplex $\tau$ that is not a face of $\xi$. Then the
        operation
        \[   T^{n,i}_{\sigma,\tau} (\xi) := (\xi \backslash (\sigma * \partial\tau)) \cup (\partial\sigma
        * \tau)  \]
         is called an $n$-dimensional \textbf{\textit{bistellar} $i$-\textit{move}}
         (or \textbf{\textit{Pachner}} move).
         Bistellar $i$-moves with $i\geq [\frac{n}{2}]$ is also
         called \textit{reverse bistellar} $(n-i)$-\textit{move}.
         See Figure~\eqref{p:2dim_Move} and Figure~\eqref{p:3dim_Move}.
  \end{definition}
   \vskip .2cm

      \begin{figure}
         \includegraphics[width=0.90\textwidth]{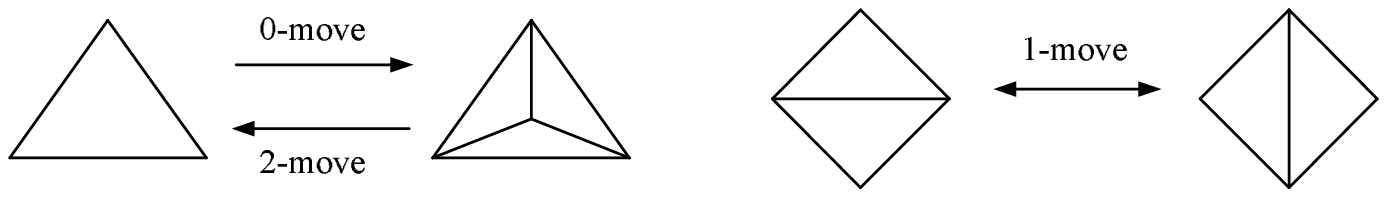}\\
          \caption{Bistellar moves in dimension $2$}\label{p:2dim_Move}
      \end{figure}

       \begin{figure}
         \includegraphics[width=\textwidth]{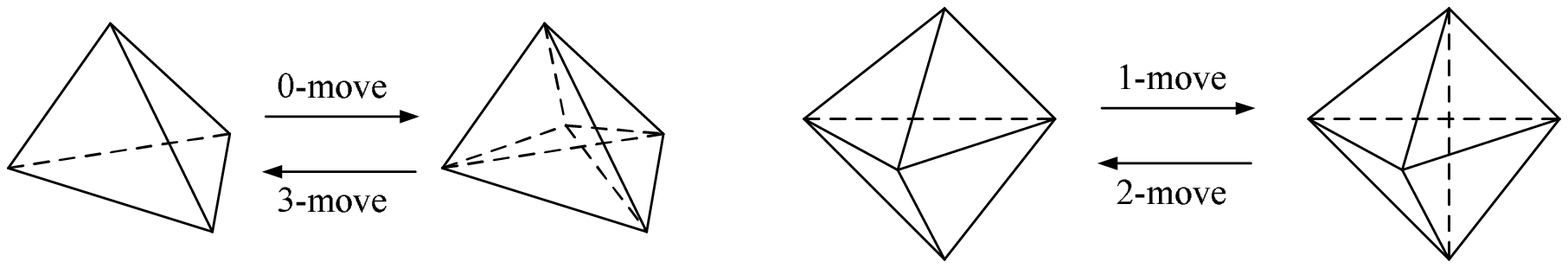}\\
          \caption{Bistellar moves in dimension $3$}\label{p:3dim_Move}
      \end{figure}

       \begin{definition}
         Suppose that $\sigma$ and $\tau$ are simplexes of a combinatorial
         $n$-manifold $M^n$ with boundary which satisfy:
         \begin{enumerate}
           \item the join $\sigma * \tau$ is an $n$-simplex of $M^n$;
           \item $\tau \cap \partial M^n = \partial\tau$ and
                $\sigma * \partial\tau \subset \partial M^n$.
         \end{enumerate}
            Then the manifold
     obtained from $M^n$ by the \textbf{\textit{elementary shelling}} $S^{n,i}_{\sigma,\tau}$
     from $\sigma$ is the closure of $M \backslash (\sigma * \tau)$, denoted by
     $S^{n,i}_{\sigma,\tau}(M^n)$. And we
     call $S^{n,i}_{\sigma,\tau}$ an $n$-dimensional elementary $i$-shelling where
     $i = \mathrm{dim}(\sigma)$ (note that $0\leq i \leq n-1$).
       \end{definition}

        \begin{figure}
         \includegraphics[width=0.62\textwidth]{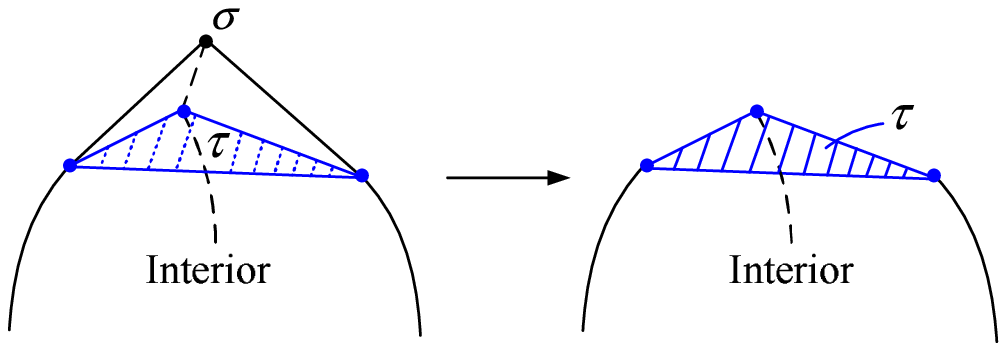}\\
          \caption{ Elementary $0$-shelling in dimension $3$ }\label{p:Shelling_0}
      \end{figure}

        \begin{figure}
         \includegraphics[width=0.62\textwidth]{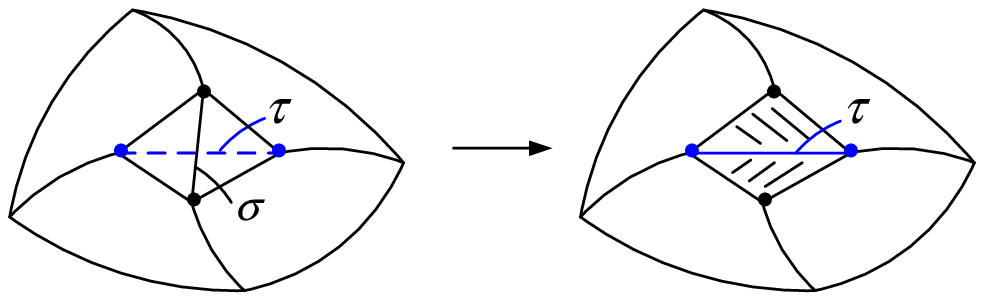}\\
          \caption{ Elementary $1$-shelling in dimension $3$ }\label{p:Shelling_1}
      \end{figure}

        \begin{figure}
         \includegraphics[width=0.64\textwidth]{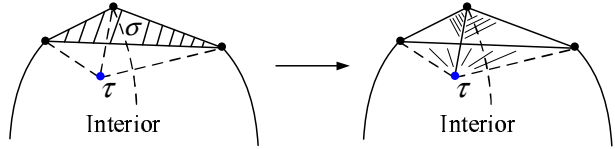}\\
          \caption{ Elementary $2$-shelling in dimension $3$ }\label{p:Shelling_2}
      \end{figure}

      Notice that on the boundary, $\partial S^{n,i}_{\sigma,\tau}(M^n)$ and
      $\partial M^n$ are related by an $(n-1)$-dimensional bistellar
      $(n-1-i)$-move (See Figure~\ref{p:Shelling_0}, Figure~\ref{p:Shelling_1}
      and Figure~\ref{p:Shelling_2} for the $3$-dimensional pictures). \\

       Suppose $T^{n,i}$ is a bistellar $i$-move on $(M^n,\xi)$, let
       $T^{n,i}(\xi)$ be the combinatorial structure on $M^n$ after the
       move. Then it is easy to see
        \[   f_k(M^n,T^{n,i}(\xi)) = f_k(M^n, \xi) + d_{k,i},\]
        where $d_{k,i} =  \binom{n+1-i}{k-i} - \binom{i+1}{n+1-k}$.
       It is easy to see that:
       \begin{equation} \label{Equ:d1}
          d_{k, n-i} = - d_{k,i},\ \forall \, 0 \leq i , k\leq n
      \end{equation}
       \begin{equation} \label{Equ:d2}
       \text{If}\ 2i= n, d_{k,i} =0, \ \forall \, 0 \leq  k\leq n
        \end{equation}

   \begin{definition}
      Let $\mathbf{d}_i  = (d_{0,i} ,\cdots , d_{n,i}) \in \Z_+^{n+1}$.
        For any element $\mathbf{f}\in \Z_+^{n+1}$, the operation
       $\beta^i(\mathbf{f}) = \mathbf{f} + \mathbf{d}_i$ is called a
       \textbf{\textit{virtual $i$-move}} in $\Z_+^{n+1}$.
   \end{definition}

    \begin{lem} \label{Lem:Virtual_Move}
         For two closed combinatorial $n$-manifolds $(M^n_1,\xi_1)$
         and $(M^n_2,\xi_2)$, if $\chi(M^n_1) = \chi (M^n_2)$, then there
         exists a finite sequence of virtual $i$-moves in
         $\Z_+^{n+1}$ ($0\leq i \leq n$)
         which turn $\mathbf{f}(M^n_1,\xi_1)$ to
         $\mathbf{f}(M^n_2,\xi_2)$.
   \end{lem}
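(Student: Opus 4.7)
The plan is to reduce the lemma to linear algebra over $\mathbb{Z}$. Since $\chi(M_1^n) = \chi(M_2^n)$, both $\mathbf{f}$-vectors satisfy~\eqref{Equ:DehnSome_1} with the same $f_{-1} = \chi/2$, so the difference $\Delta \mathbf{f} := \mathbf{f}(M_2^n, \xi_2) - \mathbf{f}(M_1^n, \xi_1)$ lies in the integer kernel $K \subset \mathbb{Z}^{n+1}$ of the homogeneous Dehn-Sommerville system. By Corollary~\ref{Cor:depend}, $K$ is a free abelian group of rank $[\frac{n+1}{2}]$ whose elements are determined by their first $[\frac{n+1}{2}]$ coordinates. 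Each $\mathbf{d}_i$ also lies in $K$ (bistellar moves preserve Euler characteristic), and~\eqref{Equ:d1} gives $\mathbf{d}_{n-i} = -\mathbf{d}_i$, so the pair $\beta^i, \beta^{n-i}$ lets us realize either $+\mathbf{d}_i$ or $-\mathbf{d}_i$ as a single virtual move.

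The heart of the proof is to show that $\mathbf{d}_0, \mathbf{d}_1, \ldots, \mathbf{d}_{[(n-1)/2]}$ actually form a $\mathbb{Z}$-basis of $K$. From $d_{k,i} = \binom{n+1-i}{k-i} - \binom{i+1}{n+1-k}$, whenever $i \leq [\frac{n-1}{2}]$ and $k < i$ one has $k-i < 0$ and $n+1-k > i+1$ (the latter since $k+i < 2i \leq n-1$), forcing both binomial coefficients to vanish; a parallel check gives $d_{i,i} = 1$. Hence the truncated matrix $(d_{k,i})_{0 \leq k, i \leq [(n-1)/2]}$ is lower triangular with $1$'s on the diagonal, so the system $\Delta \mathbf{f} = \sum_i c_i \mathbf{d}_i$ can be solved uniquely for integers $c_0, \ldots, c_{[(n-1)/2]}$ by back-substitution on the first $[\frac{n+1}{2}]$ coordinates; equality in the remaining coordinates is then automatic because both sides lie in $K$.

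It remains to realize this decomposition as an actual sequence of virtual moves staying inside $\mathbb{Z}_+^{n+1}$. The key observation is that $\mathbf{d}_0 = \bigl(1,\, n+1,\, \binom{n+1}{2},\, \ldots,\, \binom{n+1}{n-1},\, n\bigr)$ is coordinatewise strictly positive. Choose $N$ larger than $\sum_i |c_i| \cdot \max_k |d_{k,i}|$. First apply $\beta^0$ a total of $N$ times, inflating from $\mathbf{f}(M_1^n, \xi_1)$ to $\mathbf{f}(M_1^n, \xi_1) + N\mathbf{d}_0$ (each intermediate vector is strictly positive). Then perform the $\sum_i |c_i|$ moves realizing $\sum_i c_i \mathbf{d}_i$ in any order, the $N\mathbf{d}_0$ buffer absorbing any negative excursions by choice of $N$. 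Finally apply $\beta^n$ a total of $N$ times; each step subtracts one copy of $\mathbf{d}_0$ and the trajectory descends monotonically to $\mathbf{f}(M_2^n, \xi_2) \geq \mathbf{0}$. The main obstacle is the $\mathbb{Z}$-basis claim of paragraph two — that the $\mathbf{d}_i$'s generate $K$ integrally, not merely rationally — and this is secured entirely by the triangular vanishing pattern of the binomials; the positivity preservation in paragraph three is then a routine inflation-deflation maneuver.
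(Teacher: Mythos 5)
Your proposal is correct and takes essentially the same route as the paper: using $\mathbf{d}_{n-i}=-\mathbf{d}_i$ and the Dehn--Sommerville dependence of the upper-half coordinates to reduce everything to the truncated system on the first $\left[\frac{n+1}{2}\right]$ coordinates, whose matrix $(d_{k,i})_{0\leq k,i\leq [\frac{n-1}{2}]}$ is lower triangular with $1$'s on the diagonal and hence invertible over $\Z$. The only difference is your inflation--deflation maneuver with $N\mathbf{d}_0$ to keep all intermediate vectors in $\Z_+^{n+1}$, a refinement the paper simply omits and the lemma does not really demand.
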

   \begin{proof}
     It amounts to find $(m_0,\cdots, m_{n}) \in \Z^{n+1}$
        such that
       \begin{equation} \label{Equ:steps}
          \sum^{n}_{i=0} m_i \cdot d_{k,i} = f_k(M^n_2, \xi_2) -
          f_k(M^n_1,\xi_1),\quad \forall\, 0 \leq k \leq n
     \end{equation}
    By Equation~\eqref{Equ:d1}, $d_{k, n-i} = - d_{k,i}$, so we have
     \begin{equation} \label{Equ:System1}
          \sum^{[\frac{n-1}{2}]}_{i=0} (m_i - m_{n-i}) \cdot d_{k,i} = f_k(M^n_2, \xi_2) -
          f_k(M^n_1,\xi_1),\quad \forall\, 0 \leq k \leq n
     \end{equation}

     When $\chi(M^n_1) = \chi(M^n_2)$, by Corollary~\ref{Cor:depend}, the $\mathbf{f}$-vector
     $\mathbf{f}(M^n_1,\xi)$ is decided by $f_0(M^n_1,\xi),\cdots,
           f_{[\frac{n-1}{2}]}(M^n_1,\xi)$ exactly the same way as
     $\mathbf{f}(M^n_2,\xi)$ is decided by $f_0(M^n_2,\xi),\cdots,
           f_{[\frac{n-1}{2}]}(M^n_2,\xi)$.
           In addition, since the constrain defined by
          each Dehn-Sommerville equation on $\Z^{n+1}_{+}$
           is invariant under any virtual moves (see~\cite{Pa86}),
            the solution to above system~\eqref{Equ:System1}
           of $n+1$ linear equations is the same as the solution to the
            $[\frac{n-1}{2}] + 1=[\frac{n+1}{2}]$ linear equations:
             \begin{equation} \label{Equ:System2}
                \overset{[\frac{n-1}{2}]}{\underset{i=0}{\sum}}
                (m_i - m_{n-i} ) \cdot d_{k,i} = f_k(M^n_2, \xi_2) -
          f_k(M^n_1,\xi_1),\quad \forall\, 0 \leq k \leq \left[\frac{n-1}{2} \right].
            \end{equation}

    Notice when $0\leq i\leq [\frac{n-1}{2}]$, $0\leq k\leq
       [\frac{n-1}{2}]$, $d_{k,i}= \binom{n+1-i}{k-i}$. So
        \begin{itemize}
         \item if $k< i$, $d_{k,i} =0$.
         \item if $k=i$, $d_{i,i} = 1$.
        \end{itemize}
          The square integral matrix $(d_{k,i})_{0\leq k,i\leq [\frac{n-1}{2}]}$
       is invertible over $\Z$. So the above system of $[\frac{n+1}{2}]$ linear
       equations~\eqref{Equ:System2} has a unique solution, denoted
       by $x_0,\cdots, x_{[\frac{n-1}{2}]}$. Then
       $(x_0,\cdots, x_{[\frac{n-1}{2}]}, 0, \cdots, 0)$ is a solution
       of Equation~\eqref{Equ:steps}.
       Obviously, applying $|x_i|$ times virtual
       $i$-moves (or $(n-i)$-moves if $x_i < 0$)
       for all $0\leq i \leq n$ to $\mathbf{f}(M^n_1,\xi_1)$
       in $\Z^{n+1}_+$ will turn it to $\mathbf{f}(M^n_2,\xi_2)$.
   \end{proof} \vskip .2cm

    \begin{rem}
       The solution to Equation~\eqref{Equ:steps} is not unique since
       adding the same integer to $m_i$ and $m_{n-i}$ simultaneously
         in a solution $(m_0,\cdots, m_{n})$ of Equation~\eqref{Equ:steps} will give a new
         solution. But here we only need the existence of the solution.
     \end{rem}

\section{Proof of Theorem~\ref{thm:main}} \label{Sec3}

   First, we introduce a special kind of $PL$ disks in each dimension
     which will be used as an auxiliary tool in our argument later.\vskip .2cm

     \begin{lem} \label{Lem:PlumpCell}
      For any $n\geq 1$, there exists a $PL$ $n$-disk $K^n$ such that:
        \begin{enumerate}
            \item $\partial K^n$ is isomorphic to the boundary of an
                   $n$-simplex.
            \item for any $0\leq i \leq n$, there exists a bistellar
            $i$-move $T^{n,i}$ in $K^n$ which does not cause any changes to the star of any
            vertex on $\partial K^n$.
        \end{enumerate}
    \end{lem}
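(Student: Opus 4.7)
My plan is to build $K^n$ by induction on $n$, producing inside $K^n$ a distinct bistellar $i$-move ``plug'' for each $0 \le i \le n$ that lies entirely in the interior.

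Base case $n=1$. I take the PL arc with five vertices $v_0,w_1,w_2,w_3,v_1$ and four consecutive edges. The bistellar $0$-move subdivides the interior edge $w_1w_2$; the bistellar $1$-move removes the interior vertex $w_2$ (whose link is the non-adjacent pair $\{w_1,w_3\}$) and inserts the edge $w_1w_3$, which is not present in the arc. Neither move touches the star of $v_0$ or $v_1$.

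Inductive step ($n\ge 2$). I start from a coarse candidate $K_0$ with $\partial K_0 = \partial\Delta^n$ possessing at least one interior vertex, for instance the cone $w*\partial\Delta^n$ obtained by a single stellar subdivision of $\Delta^n$. I then apply a finite sequence of interior bistellar $0$-moves (stellar subdivisions at fresh interior vertices), choosing at each stage a top simplex whose vertices are all interior so as to push the triangulation inward, and thereby manufacturing, for each $i$ in turn, a subcomplex $P_i = \sigma_i * \partial \tau_i$ in which $\sigma_i$ is an $(n-i)$-simplex of $K^n$ on interior vertices, $\tau_i$ is an $i$-simplex on interior vertices not lying in $K^n$, and the link of $\sigma_i$ in $K^n$ equals $\partial\tau_i$. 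Each plug $P_i$ then supplies the required bistellar $i$-move of condition (2).

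The hard part is, for each intermediate $i$ with $1 \le i \le n-1$, to force the link of some interior codimension-$(n-i)$ simplex to be $\partial \tau_i$ for a $\tau_i$ simultaneously absent from $K^n$; generic triangulations tend to have richer links than $\partial\Delta^i$ and to contain too many simplices. I would produce $P_i$ from a local model generalising the two-dimensional ``edge flip'': take two adjacent interior top simplices sharing a codimension-$1$ face, stellar-subdivide one of them at a freshly inserted interior vertex $u$, and then read off an interior $(n-i)$-simplex $\sigma_i$ whose link equals $\partial\tau_i$ for a $\tau_i$ containing $u$. Since $u$ is new, no previously existing simplex of $K^n$ can contain it, which automatically secures $\tau_i \notin K^n$. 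To keep distinct plugs from interfering, I would place them in pairwise disjoint interior top-simplex neighborhoods: an interior stellar subdivision at $u$ only alters simplices containing $u$, so disjointness of support ensures that building $P_i$ neither destroys an earlier $P_j$ nor accidentally re-introduces the missing $\tau_j$. Finally, condition (1) holds because no step alters $\partial K^n$, and condition (2) holds because every bistellar $i$-move inside $P_i$ uses only interior vertices.
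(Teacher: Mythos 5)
Your base case, your disjoint-support bookkeeping, and the cases $i=0,1,n$ are fine, but the local model you propose for the intermediate cases $2\le i\le n-1$ --- which is exactly the hard part you flagged --- does not work, so the construction fails for every $n\ge 3$. Suppose $A$ and $B$ are adjacent top simplices with $A\cap B=F$ and you star $A$ at a new interior vertex $u$. If $\tau_i$ is to contain $u$ and $\mathrm{lk}(\sigma_i)=\partial\tau_i$ with $i\ge 1$, then $u$ lies in $\mathrm{lk}(\sigma_i)$, so $\sigma_i$ must be a proper face of $A$ (since $\mathrm{lk}(u)=\partial A$). Let $\sigma'$ be the complementary face of $\sigma_i$ in $A$, so $\dim\sigma'=i-1$. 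After the subdivision, the simplices of $\mathrm{lk}(\sigma_i)$ containing $u$ are exactly those of $u*\partial\sigma'$, while $\sigma'$ itself is \emph{not} in $\mathrm{lk}(\sigma_i)$, because the only top simplex realizing it, namely $A=\sigma_i*\sigma'$, has just been destroyed. Now if $\mathrm{lk}(\sigma_i)=\partial\tau_i$ with $\tau_i=u*\rho$ and $i\ge 2$, comparing the simplices containing $u$ gives $\partial\rho=\partial\sigma'$, which for $\dim\rho=i-1\ge 1$ forces $\rho$ and $\sigma'$ to have the same vertex set, i.e.\ $\tau_i=u*\sigma'$; but then $\partial\tau_i$ would contain $\sigma'$, a contradiction. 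So a single stellar subdivision next to an adjacent top simplex only creates admissible $1$-moves (at the $(n-1)$-faces $F$ of $A$, with $\tau=ub$) and the reverse $n$-move at $u$; for example in dimension $3$ it never produces an interior edge of valence $3$ whose spanning triangle is absent, which is what a bistellar $2$-move requires. Hence the plugs $P_i$ for $2\le i\le n-1$ are never manufactured, and no alternative mechanism is supplied.

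The paper sidesteps this difficulty entirely: instead of trying to create the move configurations by subdivisions, it takes the standard ``before'' complexes $J^n_i=\Delta^{n-i}*\partial\Delta^i$ for all $0\le i\le n$, embeds them pairwise disjointly inside an inner simplex $\widetilde{\Delta}^n_2$ placed in the interior of $\widetilde{\Delta}^n_1$, and then extends this partial triangulation to a triangulation $K^n$ of $\widetilde{\Delta}^n_1$ without subdividing $\partial\widetilde{\Delta}^n_1$; each $J^n_i$ is by construction the full star of its central $(n-i)$-simplex, so the bistellar $i$-move replacing it by $\partial\Delta^{n-i}*\Delta^i$ is admissible, and the collar between the two simplices keeps all these moves away from the stars of boundary vertices. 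If you want to retain your ``build the plugs by interior operations'' strategy, you must either embed such a $J^n_i$ directly (which is essentially the paper's proof) or give a genuinely different, and necessarily more elaborate, sequence of preparatory moves for each intermediate $i$, together with a proof that the required simplex $\tau_i$ is absent; the one-subdivision shortcut cannot do this.
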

    \begin{proof}
        For each $0\leq i \leq n$, let $\Delta^i$ be a simplex of
        dimension $i$ and let $J^n_i = \Delta^{n-i} * \partial\Delta^i $.
        Suppose $\widetilde{\Delta}^n_1, \widetilde{\Delta}^n_2$ are
         two $n$-simplices such that $\widetilde{\Delta}^n_2$ is contained
         in the interior of $\widetilde{\Delta}^n_1$.
         Next, we put each $J^n_i$ inside $\widetilde{\Delta}^n_2$
          such that they do not intersect. By making up some new simplices and
          adding some new vertices inside $\Delta^n_2$ if necessary,
          we get a PL $n$-disk $K^n$ (see Figure~\ref{p:Magic_Cell} for a
          construction of $K^2$). The subdivision in $\widetilde{\Delta}^n_1 -
          (\widetilde{\Delta}^n_2)^{\circ}$ is canonical such that
          the triangulations on $\partial \widetilde{\Delta}^n_1$
          and $ \partial \widetilde{\Delta}^n_2$ are not changed. So
          $\partial K^n$ is isomorphic to the boundary of an
          $n$-simplex.

          Obviously, there is a bistellar
          $i$-move $T^{n,i}$ (replaceing $J^n_i$ by $\partial \Delta^{n-i} * \Delta^i$)
          associated to each $J^n_i$ inside $\Delta^n_2$. And
          $T^{n,i}$ does not change the the star of any
            vertex on $\partial K^n = \partial \widetilde{\Delta}^n_1 $.
    \end{proof} \vskip .2cm

     \begin{figure}
         \includegraphics[width=0.73\textwidth]{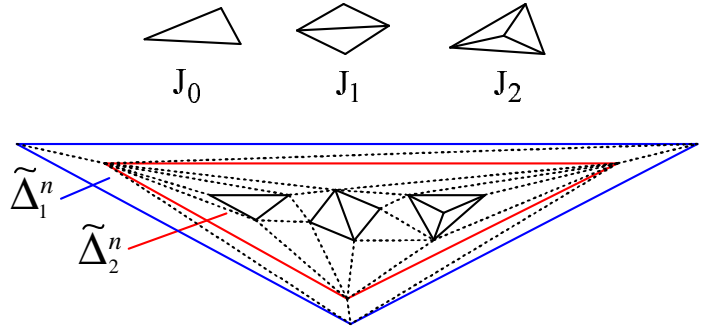}\\
          \caption{A construction of $K^2$}\label{p:Magic_Cell}
      \end{figure}

    Note that our construction of $K^n$ is far from unique. Here, we only
        need to choose one such $K^n$ in each dimension $n$. We call $K^n$
         the \textbf{\textit{plump $n$-cell}}. \vskip .2cm

  \begin{lem} \label{Lem:Change_f-vector}
         For two closed combinatorial $n$-manifolds $(M^n_1,\xi_1)$
         and $(M^n_2,\xi_2)$,
         if $\chi(M^n_1) = \chi (M^n_2)$, then there exists a
         combinatorial manifold structure $\xi^*_i$ on $M^n_i$ ($i=1,2$)  such that
           $(M^n_i, \xi^*_i) $ is PL-homeomorphic to $(M^n_i, \xi_i)$ and
             $\mathbf{f}(M^n_1, \xi^*_1)=\mathbf{f}(M^n_2, \xi^*_2)$.
         \end{lem}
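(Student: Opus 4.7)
The plan is to use Lemma~\ref{Lem:Virtual_Move} to obtain a sequence of virtual moves that turns $\mathbf{f}(M^n_1,\xi_1)$ into $\mathbf{f}(M^n_2,\xi_2)$, and then to realize those virtual moves as honest bistellar moves on the combinatorial manifolds. The obstacle to doing this directly is that the link configurations required by the definition of a bistellar $i$-move need not be present in $(M^n_i, \xi_i)$. The plump $n$-cells $K^n$ furnished by Lemma~\ref{Lem:PlumpCell} are designed to circumvent exactly this obstruction: each $K^n$ supports a bistellar $i$-move in every dimension $0\le i\le n$, and these moves leave the star of every vertex on $\partial K^n$ untouched, so they can be carried out inside any $n$-simplex of the ambient manifold without disturbing anything outside.

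More concretely, let $\beta^{i_1},\ldots,\beta^{i_N}$ be a sequence of virtual moves supplied by Lemma~\ref{Lem:Virtual_Move}, so that
$$\mathbf{f}(M^n_1,\xi_1) + \sum_{k=1}^N \mathbf{d}_{i_k} \;=\; \mathbf{f}(M^n_2,\xi_2).$$
I would first pick $N$ pairwise distinct $n$-simplices of $\xi_1$ and replace each by a combinatorial copy of $K^n$; since $\partial K^n$ is isomorphic to the boundary of an $n$-simplex, each replacement is a PL modification that does not change the PL-type. Inside the $k$-th plump cell I then perform the prescribed bistellar $i_k$-move from Lemma~\ref{Lem:PlumpCell}. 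Because each such move is confined to the interior of its plump cell, all $N$ moves can be executed independently; call the resulting combinatorial structure $\xi^*_1$. Writing $\mathbf{c}$ for the fixed f-vector change produced by a single plump cell insertion, we obtain
$$\mathbf{f}(M^n_1,\xi^*_1) \;=\; \mathbf{f}(M^n_1,\xi_1) + N\mathbf{c} + \sum_{k=1}^N \mathbf{d}_{i_k}.$$
On the other manifold I would insert $N$ disjoint copies of $K^n$ into $(M^n_2,\xi_2)$ in the same way but perform no bistellar moves at all, producing a PL-equivalent structure $\xi^*_2$ with $\mathbf{f}(M^n_2,\xi^*_2) = \mathbf{f}(M^n_2,\xi_2) + N\mathbf{c}$. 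Combining these two identities with the virtual-move relation yields $\mathbf{f}(M^n_1,\xi^*_1) = \mathbf{f}(M^n_2,\xi^*_2)$, as required.

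The hard part, which is really just bookkeeping, is to guarantee that enough top-dimensional simplices are available to host all the plump cell insertions. If some $\xi_i$ has fewer than $N$ top-dimensional simplices, one inserts a preliminary plump cell (which already contributes many $n$-simplices in its interior) and then proceeds; any extra insertions performed on one side are simply matched on the other, since each insertion contributes the same fixed correction $\mathbf{c}$ to the f-vector. The conceptual content of the argument is thus the bridging of virtual moves on $\Z^{n+1}_+$ (which always exist, by Lemma~\ref{Lem:Virtual_Move}) and honest bistellar moves on the manifold (whose local prerequisites are manufactured on demand by Lemma~\ref{Lem:PlumpCell}).
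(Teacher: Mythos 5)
Your proposal is correct and follows essentially the same route as the paper: invoke Lemma~\ref{Lem:Virtual_Move}, insert equal numbers of plump cells $K^n$ into both manifolds (so the difference of $\mathbf{f}$-vectors is unchanged), and realize the $N$ virtual moves by bistellar moves performed inside the plump cells of $M^n_1$ only. The only cosmetic difference is in guaranteeing $N$ available $n$-simplices --- the paper first applies matching bistellar $0$-moves to both triangulations, while you use extra matched plump-cell insertions --- and both devices work.
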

   \begin{proof}
          First of all, by the Lemma~\ref{Lem:Virtual_Move}, there exist finite
          number of virtual moves in $\Z^{n+1}_+$ which turn $\mathbf{f}(M^n_1, \xi_1)$
           to $\mathbf{f}(M^n_2, \xi_2)$. Suppose the total number of virtual moves
           used is $N$.
           But it is not clear
           whether we can find $N$ bistellar moves on $(M^n_1,\xi_1)$ that
           realize these virtual moves on its $\mathbf{f}$-vector.

           To solve the problem, we can first apply the same number of bistellar $0$-moves
           on $\xi_1$ and $\xi_2$ so that the new triangulations $\xi'_1,
           \xi'_2$ each contains more than $N$ $n$-simplices. Then we choose arbitrary
           $N$ $n$-simplices in $\xi'_1$ and $\xi'_2$ respectively and subdivide
           each of the $N$ $n$-simplices into the plump cell $K^n$ (see Lemma~\ref{Lem:PlumpCell}).
           Denote the resulting combinatorial structure on $M^n_i$ by $\xi_i''$ ($i=1,2$).
           Notice
           \[ \mathbf{f}(M^n_2, \xi''_2)- \mathbf{f}(M^n_1, \xi_1'')  =
            \mathbf{f}(M^n_2, \xi_2)- \mathbf{f}(M^n_1, \xi_1).
              \]
            So we need exactly the same number and type of virtual moves
            in $\Z^{n+1}_{+}$ to turn $\mathbf{f}(M^n_1, \xi_1'')$
           to $\mathbf{f}(M^n_2, \xi_2'')$ as we turn $\mathbf{f}(M^n_1, \xi_1)$
           to $\mathbf{f}(M^n_2, \xi_2)$.
           Now, each of the $N$ plump cells in $M^n_i$ contains a bistellar move of all types,
            so we can realize those $N$ virtual moves on $\mathbf{f}(M^n_1,
           \xi_1'')$ in $\Z_+^{n+1}$ by $N$ bistellar moves on the triangulation.
            In the end, we get a combinatorial structure $\xi^*_1$ on $M^n_1$
           with $\mathbf{f}(M^n_1, \xi^*_1) = \mathbf{f}(M^n_2, \xi''_2)$.
           Set $\xi^*_2 = \xi''_2$, obviously $(M^n_i, \xi^*_i) $ is
           PL-homeomorphic to $(M^n_i, \xi_i)$, $i=1,2$. So we
           are done.
     \end{proof}
   \vskip .2cm

  \textbf{Proof of Theorem~\ref{thm:main}:}
   By the assumption, $\chi(M^n_1) = \chi(M^n_2)$, so by Lemma~\ref{Lem:Change_f-vector},
   we can assume $\mathbf{f}(M^n_1, \xi_1) = \mathbf{f}(M^n_2,\xi_2)$.
   Since the invariant $\Phi$ depends only on the $\mathbf{f}$-vector of a combinatorial manifold,
   then $\Phi(M^n_1,\xi_1) = \Phi(M^n_2,\xi_2)$. \hfill $\square$

    \ \\

  \section{Proof of Theorem~\ref{thm:main2}}  \label{Sec4}

    We first construct a special $PL$ $n$-disk associated to the plump $(n-1)$-cell
    $K^{n-1}$ in each dimension $n\geq 2$. It will be useful later
    in our argument.

    \begin{lem} \label{Lem:PlumpCell}
      For any $n\geq 2$, there exists a $PL$ $n$-disk $U^n$ such that:
        \begin{enumerate}
            \item the boundary of $U^n$ is isomorphic to a $PL$ $(n-1)$-sphere
            that is got from subdividing an $(n-1)$-face on the boundary of an $n$-simplex
            into a plump cell $K^{n-1}$. We denote the $PL$ $(n-1)$-disk
            on $\partial U^n$ which is isomorphic to a $K^{n-1}$ by $Z^{n-1}$. \vskip .1cm

            \item for any $0\leq i \leq n-1$, there exists an $n$-dimensional
             elementary $(n-1-i)$-shelling $S^{n,n-1-i}$ on $U^n$ which induces
             the bistellar $i$-move $T^{n-1, i}$ in $ Z^{n-1} \cong K^{n-1}$, and
             $S^{n,n-1-i}$ does not cause any changes to the star of any
            vertex in $\partial U^n - (Z^{n-1})^{\circ}$.
        \end{enumerate}
    \end{lem}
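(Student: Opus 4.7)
The plan is to build $U^n$ by starting from the cone on $K^{n-1}$ and then locally modifying it near each of the disjoint pieces $J^{n-1}_i \subset K^{n-1}$ so that each bistellar $i$-move in $K^{n-1}$ becomes realisable by an elementary shelling. Introduce a new vertex $p$ and let $C = p * K^{n-1}$; this is a PL $n$-disk whose boundary $K^{n-1} \cup (p * \partial K^{n-1})$ is visibly isomorphic to $\partial \Delta^n$ with one $(n-1)$-face subdivided into $K^{n-1}$, yielding condition (1) as soon as we arrange that the local modifications do not touch $\partial C$.

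Because the $J^{n-1}_i$ lie in pairwise disjoint regions in the interior of $K^{n-1}$, the cones $p * J^{n-1}_i$ can be modified independently. For $i = 0$, where $J^{n-1}_0 = \Delta^{n-1}$, stellar-subdivide the $n$-simplex $p * \Delta^{n-1}$ by inserting a new interior vertex $q_0$; the refined piece contains the $n$-simplex $B_0 = \Delta^{n-1} * q_0$. For $1 \le i \le n-1$, where $J^{n-1}_i = \Delta^{n-1-i} * \partial \Delta^i$, re-triangulate the $n$-disk $p * J^{n-1}_i$ as
\[
(\Delta^{n-1-i} * \Delta^i) \cup (p * \partial \Delta^{n-1-i} * \Delta^i),
\]
where the two pieces meet along $\partial \Delta^{n-1-i} * \Delta^i$; this introduces the interior $i$-simplex $\Delta^i$ (no new vertex is needed since $\partial \Delta^i$ already lies in $K^{n-1}$) and contains the bump $n$-simplex $B_i = \Delta^{n-1-i} * \Delta^i$. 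A direct boundary computation shows that this re-triangulation agrees with the cone $p * J^{n-1}_i$ on the boundary $J^{n-1}_i \cup (p * \partial \Delta^{n-1-i} * \partial \Delta^i)$ of the affected $n$-disk, so the modification is internal. Let $U^n$ be the result of applying all of these local modifications to $C$.

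For condition (2), take $\sigma = \Delta^{n-1-i} \subset J^{n-1}_i \subset \partial U^n$ and let $\tau$ denote $q_0$ when $i = 0$ and $\Delta^i$ when $1 \le i \le n-1$. Then $\tau$ is interior to $U^n$, $\partial \tau \subset J^{n-1}_i \subset \partial U^n$, and $\sigma * \tau = B_i$ is an $n$-simplex of $U^n$ with $\sigma * \partial \tau = J^{n-1}_i$; hence $S^{n,n-1-i}_{\sigma,\tau}$ is a legitimate elementary $(n-1-i)$-shelling, and on the boundary it replaces $J^{n-1}_i$ by $\partial \Delta^{n-1-i} * \tau$, which is exactly the bistellar $i$-move $T^{n-1,i}$ inside $Z^{n-1}$. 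Every vertex of $B_i$ either lies in $(Z^{n-1})^\circ$ (the vertices of $\Delta^{n-1-i}$ and, for $i \ge 1$, of $\Delta^i$) or is the interior vertex $q_0$ (when $i = 0$), so no vertex of $\partial U^n \setminus (Z^{n-1})^\circ$ appears in $B_i$ and the shelling leaves every such vertex's star intact.

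The one step that needs care in a full write-up is the re-triangulation for $1 \le i \le n-1$, whose correctness rests on the PL join identity $p * \Delta^{n-1-i} * \partial \Delta^i = (\Delta^{n-1-i} * \Delta^i) \cup (p * \partial \Delta^{n-1-i} * \Delta^i)$ together with the fact that the two pieces intersect exactly in $\partial \Delta^{n-1-i} * \Delta^i$ and share the same triangulation on the outer boundary of $p * J^{n-1}_i$. Once this is in hand, conditions (1) and (2) are routine bookkeeping on the vertex sets of the bumps.
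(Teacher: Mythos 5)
Your construction is correct, and it realizes exactly the same key configuration as the paper: for each $0\le i\le n-1$ an $n$-simplex of the form $\Delta^{n-1-i}*\Delta^i$ (your $B_i$, the paper's $E^n_i$) sitting in $U^n$ so that it meets $\partial U^n$ precisely in $J^{n-1}_i\subset K^{n-1}$, whence its removal is an elementary $(n-1-i)$-shelling whose boundary effect is the bistellar $i$-move, and whose vertices all lie in $(Z^{n-1})^\circ$ or in the interior of $U^n$. Where you differ is in how the ambient disk is produced: the paper places the simplices $E^n_i$ inside a nested simplex touching the distinguished $(n-1)$-face and then fills in the complement ``by making up some new faces and vertices if necessary,'' leaving the plump-cell subdivision of that face as a byproduct of the filling; you instead start from the cone $p*K^{n-1}$, which gives condition (1) for free, and create the bumps by explicit interior re-triangulations. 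Your flagged step for $1\le i\le n-1$ does check out, and in fact it is nothing but a bistellar $i$-move inside the cone: with $\sigma'=p*\Delta^{n-1-i}$ and $\tau'=\Delta^i$ one has $\mathrm{lk}_{p*K^{n-1}}(\sigma')=\partial\Delta^i$ (since $\mathrm{lk}_{K^{n-1}}(\Delta^{n-1-i})=\partial\Delta^i$ and no link simplex contains $p$), and $\partial\sigma'*\tau'=(\Delta^{n-1-i}*\Delta^i)\cup(p*\partial\Delta^{n-1-i}*\Delta^i)$ is exactly your replacement complex; likewise the $i=0$ step is the bistellar $0$-move on $p*\Delta^{n-1}$. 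Both pieces are $n$-balls with common boundary $(\Delta^{n-1-i}*\partial\Delta^i)\cup(p*\partial\Delta^{n-1-i}*\partial\Delta^i)$, none of the removed or added simplices lies in $\partial(p*K^{n-1})$ (the $J^{n-1}_i$ avoid $\partial K^{n-1}$ by the plump-cell construction), and $\Delta^i$ is not already a face, so the modification is internal and simplicial. The trade-off: the paper's argument is shorter but leans on an unspecified ``fill in the rest'' step, while yours is more self-contained and verifiable, at the cost of the join-identity bookkeeping you already identified.
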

    \begin{proof}
       Let $\Delta^i$ be a simplex of
        dimension $i$ and $E^n_i = \Delta^{n-1-i} * \Delta^i $ is an $n$-simplex.
        Let $J^{n-1}_i = \Delta^{n-1-i} *\partial \Delta^i \subset \partial E^n_i
        $ for each $0\leq i \leq n-1$.
        On the other hand, suppose $\widetilde{\Delta}^n_1, \widetilde{\Delta}^n_2$ are
         two $n$-simplices and $\widetilde{\Delta}^{n-1}_1$, $\widetilde{\Delta}^{n-1}_2$
         is an $(n-1)$-face of $\widetilde{\Delta}^n_1, \widetilde{\Delta}^n_2$
         respectively.
          We can put $\widetilde{\Delta}^n_2 $ in $\widetilde{\Delta}^n_1$
        such that $\widetilde{\Delta}^n_2 \cap \partial\widetilde{\Delta}^n_1 =
        \widetilde{\Delta}^{n-1}_2 \subset \widetilde{\Delta}^{n-1}_1$.
         Next, we put each $E^n_i$ in $\widetilde{\Delta}^n_2$
         such that $E^n_i \cap \partial\widetilde{\Delta}^n_2 = J^{n-1}_i \subset
          \widetilde{\Delta}^{n-1}_2$ and $E^n_i \cap E^n_{i'} = \varnothing $
          (see Figure~\ref{p:Un}).

          Now, by making up some new faces and vertices
           in $\widetilde{\Delta}^n_1- \cup_i E^n_i$ if necessary,
           we can subdivide $\widetilde{\Delta}^n_1$
           into a $PL$ $n$-disk $U^n$ such that: on the $(n-1)$-face
              $\widetilde{\Delta}^{n-1}_1$, the subdivision turns
              it into a $PL$ $(n-1)$-disk $Z^{n-1}$ that is isomorphic to
              the plump $(n-1)$-cell $K^{n-1}$ (see
              Lemma~\ref{Lem:PlumpCell}),
              and all other $(n-1)$-faces of $\partial \widetilde{\Delta}^n_1 $
              are not subdivided. \vskip .1cm

          Obviously, the elementary $(n-1-i)$-shelling $S^{n,n-1-i}$ associated to
          $E^n_i$ induces the $(n-1)$-dimensional bistellar $i$-move $T^{n-1,i}$ on
          $Z^{n-1}\cong K^{n-1}$. And by our construction, the elementary shelling
          $S^{n,n-1-i}$ only changes
          the stars of the vertices on $\partial U^n$ that lie in
          $(Z^{n-1})^{\circ}$ (the interior of $Z^{n-1}$).
    \end{proof}
      \begin{figure}
         \includegraphics[width=0.62\textwidth]{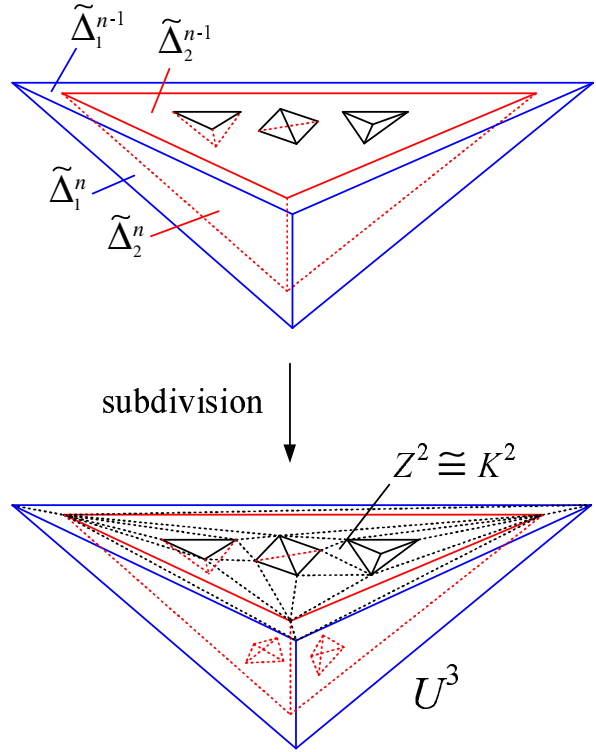}\\
          \caption{A construction of $U^n$ for $n=3$}\label{p:Un}
      \end{figure}

     Such a $U^n$ is called a \textbf{\textit{mold}} \textbf{\textit{$n$-cell}}. Of course,
   mold $n$-cells are not unique. Here,
    we only need to choose a mold $n$-cell $U^n$ for each dimension $n\geq 2$.
    \vskip .2cm

    \begin{definition}
      given an $n$-simplex $\Delta^n$ and an $(n-1)$-face
    $F^{n-1}$ of $\Delta^n$, we first do an $(n-1)$-dimensional bistellar
    $0$-move to $F^{n-1}$ on $\partial \Delta^n$, the boundary
    of $\Delta^n$ becomes a $PL$ $(n-1)$-sphere $L^{n-1}$. Then we
    add a new vertex $v_0$ in the interior of $\Delta^n$ and
    take $v_0 *  L^{n-1}$. The $PL$ $n$-disk $v_0 *  L^{n-1}$
   is called a
   \textit{\textbf{star subdivision of $\Delta^n$ along a face}} $F^{n-1}$ (see
   Figure~\ref{p:Star_Subdivision} for examples). \vskip .2cm
    \end{definition}

    \begin{figure}
         \includegraphics[width=0.92\textwidth]{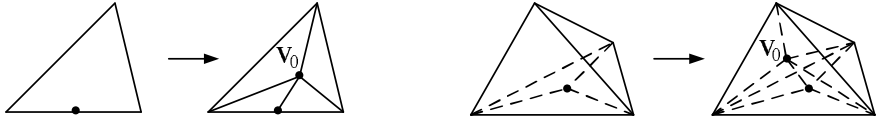}\\
          \caption{Star subdivisions of $\Delta^n$ along a face, $n=2,3$}
          \label{p:Star_Subdivision}
      \end{figure}

    \begin{definition}
       An $n$-simplex $\Delta^n$ in a combinatorial $n$-manifold with boundary
       $(M^n,\xi)$ is
       called \textbf{\textit{exposed}} if there exists some $(n-1)$-face of $\Delta^n$ lying
        on $\partial M^n$.  $\Delta^n$ is called \textbf{\textit{one-face-exposed}}
       if $\Delta^n \cap \partial M^n$ is exactly one $(n-1)$-face of $\Delta^n$.
    \end{definition} \vskip .2cm

    Note that if we do a star subdivision of an exposed $n$-simplex
    $\Delta^n$ in $(M^n,\xi)$ along an $(n-1)$-face of $\Delta^n \cap \partial
    M^n$, we will produce $n$ new one-face-exposed $n$-simplices for $M^n$.
    (see Figure~\ref{p:Star_Subdivision}).\\

    \begin{lem} \label{Lem:Change_f-vector_2}
         For two combinatorial $n$-manifolds with boundary $(M^n_1,\xi_1)$
         and $(M^n_2,\xi_2)$,
         if $\chi(\partial M^n_1) = \chi(\partial M^n_2) $,
         then there exists a
         combinatorial manifold structure $\xi^*_i$ on $M^n_i$ ($i=1,2$)  such that
           $(M^n_i, \xi^*_i) $ is PL-homeomorphic to $(M^n_i, \xi_i)$ and
         $\mathbf{f}(\partial M^n_1, \partial \xi^*_1)
           =\mathbf{f}(\partial M^n_2, \partial \xi^*_2)$.
    \end{lem}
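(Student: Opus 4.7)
The plan is to mirror the proof of Lemma~\ref{Lem:Change_f-vector}, except that the closed combinatorial $n$-manifolds are replaced by the closed combinatorial $(n-1)$-manifolds $\partial M^n_1,\partial M^n_2$, and bistellar moves on these boundaries are realized by elementary shellings of the ambient $n$-manifolds via the mold cells of Lemma~\ref{Lem:PlumpCell}. Since $\chi(\partial M^n_1)=\chi(\partial M^n_2)$, Lemma~\ref{Lem:Virtual_Move} applied in dimension $n-1$ supplies a finite sequence of $N$ virtual $i$-moves in $\Z^n_+$ that carries $\mathbf{f}(\partial M^n_1,\partial\xi_1)$ to $\mathbf{f}(\partial M^n_2,\partial\xi_2)$. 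The job is then to realize each such $(n-1)$-dimensional bistellar $i$-move on the boundary as the induced boundary effect of an $n$-dimensional elementary $(n-1-i)$-shelling performed inside $M^n_1$, which is exactly what a mold $n$-cell $U^n$ delivers.

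To set the stage for these shellings, I would first apply enough rounds of star subdivision along boundary $(n-1)$-faces to each $(M^n_i,\xi_i)$ to produce at least $N$ pairwise disjoint one-face-exposed $n$-simplices; such subdivisions are PL-homeomorphisms of the triangulated manifold. Inside each chosen one-face-exposed $n$-simplex I would install a mold $n$-cell $U^n$, arranging that its unique boundary face becomes a plump $(n-1)$-cell $Z^{n-1}\cong K^{n-1}$ on $\partial M^n_i$. Call the resulting structures $\xi''_1,\xi''_2$. Because these preparatory refinements are applied identically on both sides, one has
\[
\mathbf{f}(\partial M^n_2,\partial\xi''_2)-\mathbf{f}(\partial M^n_1,\partial\xi''_1)=\mathbf{f}(\partial M^n_2,\partial\xi_2)-\mathbf{f}(\partial M^n_1,\partial\xi_1),
\]
so the same $N$ virtual moves will still equate the new boundary $\mathbf{f}$-vectors. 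Walking through the sequence, I would realize the $k$-th virtual $i$-move by applying the prescribed elementary $(n-1-i)$-shelling in an as-yet-unused mold cell of $M^n_1$; by clause (2) of Lemma~\ref{Lem:PlumpCell}, this shelling changes $\partial M^n_1$ only inside the corresponding $Z^{n-1}$ and remains a PL-homeomorphism of $M^n_1$. Setting $\xi^*_2:=\xi''_2$ and letting $\xi^*_1$ be the triangulation obtained at the end of this shelling sequence yields the desired $\mathbf{f}(\partial M^n_1,\partial\xi^*_1)=\mathbf{f}(\partial M^n_2,\partial\xi^*_2)$.

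The main obstacle is the non-interference of the mold cells under the scheduled shellings: each elementary shelling must alter only its own $Z^{n-1}$ on the boundary, so that the induced boundary bistellar moves can be performed in any order without conflict. This is exactly what clause (2) of Lemma~\ref{Lem:PlumpCell} guarantees once the mold cells are installed in pairwise disjoint one-face-exposed $n$-simplices, so the only combinatorial flexibility actually used is having a sufficient supply of such simplices -- which is cheap to arrange by iterated star subdivision. Note that, unlike Lemma~\ref{Lem:Change_f-vector}, no hypothesis on $\chi(M^n_1),\chi(M^n_2)$ is invoked here, because only the boundary $\mathbf{f}$-vectors are being matched; the interior $f_k$'s may still differ.
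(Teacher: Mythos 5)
Your proposal is correct and follows essentially the same route as the paper: invoke Lemma~\ref{Lem:Virtual_Move} for the closed $(n-1)$-manifolds $\partial M^n_i$, create enough one-face-exposed $n$-simplices by star subdivisions applied identically on both sides (so the boundary $\mathbf{f}$-vector difference is unchanged), convert them into mold $n$-cells, and realize each virtual $i$-move by an elementary $(n-1-i)$-shelling inducing a bistellar $i$-move in a plump $(n-1)$-cell, finally taking $\xi^*_2$ to be the prepared structure on $M^n_2$. Your closing remark that no hypothesis on $\chi(M^n_i)$ is needed also matches the paper's statement of the lemma.
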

    \begin{proof}
       Since $\chi(\partial M^n_1) = \chi(\partial M^n_2)
      $, by Lemma~\ref{Lem:Virtual_Move}, there exists
      a sequence of $N$ virtual moves in $\Z^{n}_+$ which turn
        $\mathbf{f}(\partial M^n_1,\partial\xi_1)$ to $\mathbf{f}(\partial M^n_2,\partial \xi_2)$.
      But it is not clear whether we can realize these $N$ virtual moves on
      $\mathbf{f}(\partial M^n_1,\partial\xi_1)$ in
      $\Z^{n}_{+}$ by $N$ bistellar moves on $(\partial M^n_1,\partial\xi_1)$
      induced from elementary shellings in $(M^n_1, \xi_1)$.
      Similar to the proof of Lemma~\ref{Lem:Change_f-vector}, we
      can get around this problem by the following steps:
       \begin{enumerate}
         \item By applying the same number of star subdivisions in some
         exposed $n$-simplices
          of $(M^n_i,\xi_i)$ ($i=1,2$) along their $(n-1)$-faces on $\partial M^n_i$,
           we can assume that each $(M^n_i, \xi_i)$
              has more than $N$ one-face-exposed $n$-simplices. Note
              that this process will not change the difference
              $\mathbf{f}(\partial M^n_2,\partial \xi_2)
              -\mathbf{f}(\partial M^n_1,\partial\xi_1)$.
               So the virtual moves in $\Z^{n}_+$ needed to turn
        $\mathbf{f}(\partial M^n_1,\partial\xi_1)$ to $\mathbf{f}(\partial M^n_2,\partial \xi_2)$
        remain the same as before.
            \vskip .2cm

       \item Choose $N$ one-face-exposed $n$-simplices
         $\widetilde{\Delta}^n_1,\cdots, \widetilde{\Delta}^n_N$ in
         $(M^n_1,\xi_1)$ and turn each $\widetilde{\Delta}^n_j$ into a mold
         cell $U^n_j$ such that $U^n_j \cap \partial M^n_1 = Z^{n-1}_j \cong K^{n-1}$.
         At the same time, we do the exactly same operations on
         $N$ one-face-exposed $n$-simplices in
         $(M^n_2,\xi_2)$.
         The new combinatorial manifold structure on $M^n_i$ we get is
         denoted by $\xi'_i$ ($i=1,2$). It is easy to see:
          \begin{enumerate}
           \item $\mathbf{f} (\partial M^n_2,
           \partial \xi'_2) - \mathbf{f} (\partial M^n_1, \partial \xi'_1)
           = \mathbf{f} (\partial M^n_2, \partial \xi_2) -
           \mathbf{f} (\partial M^n_1, \partial\xi_1) $;

            \item on the boundary, each $(\partial M^n_i,\partial
            \xi'_i)$ contains $N$ plump $(n-1)$-cells;
          \end{enumerate} \vskip .2cm

        \item By (2)(a), we need exactly the same number and type of
         virtual moves in $\Z^n_+$ to turn
         $\mathbf{f} (\partial M^n_1, \partial \xi'_1)$ to
         $\mathbf{f} (\partial M^n_2, \partial \xi'_2)$ as we use to turn
           $ \mathbf{f} (\partial M^n_1, \partial\xi_1) $ to
           $\mathbf{f} (\partial M^n_2, \partial \xi_2)$.
           And for each virtual $i$-move used, we can do an elementary
           $(n-1-i)$-shelling in a mold $n$-cell of $(M^n_1, \xi'_1)$ which
           induces a bistellar $i$-move in a plump $(n-1)$-cell on
           $(\partial M^n_1, \partial \xi'_1)$. In the end,
           we get a new combinatorial structure
         $\xi^*_1$ on $M^n_1$ with
           $\mathbf{f}(\partial M^n_1, \partial \xi^*_1)
         = \mathbf{f} (\partial M^n_2, \partial \xi'_2)$.
    \end{enumerate}
       Let $\xi_2^* = \xi_2'$, obviously
       $(M^n_i, \xi^*_i) $ is PL-homeomorphic to $(M^n_i, \xi_i)$, $i=1,2$.
       So we are done.
    \end{proof}
  \vskip .2cm

   \begin{lem} \label{Lem:Change_f-vector_3}
         For two combinatorial $n$-manifolds with boundary $(M^n_1,\xi_1)$
         and $(M^n_2,\xi_2)$,
         if $\chi(M^n_1) = \chi (M^n_2)$ and $\chi(\partial M^n_1) = \chi(\partial M^n_2) $,
          then there exists a
         combinatorial manifold structure $\xi^*_i$ on $M^n_i$ ($i=1,2$)  such that
           $(M^n_i, \xi^*_i) $ is PL-homeomorphic to $(M^n_i, \xi_i)$ and $\mathbf{f}(M^n_1, \xi^*_1)
           =\mathbf{f}(M^n_2, \xi^*_2)$ and $\mathbf{f}(\partial M^n_1, \partial \xi^*_1)
         = \mathbf{f} (\partial M^n_2, \partial \xi^*_2)$.
         \end{lem}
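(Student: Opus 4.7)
The plan is to split the problem into two independent adjustments: first equalize the boundary f-vectors using Lemma~\ref{Lem:Change_f-vector_2}, and then equalize the interior f-vectors via bistellar moves carried out inside plump cells embedded in $M^n_1$, so that the boundary triangulation stays frozen throughout.

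Stage one is a direct invocation of Lemma~\ref{Lem:Change_f-vector_2}: using the hypothesis $\chi(\partial M^n_1) = \chi(\partial M^n_2)$, replace $\xi_i$ by PL-equivalent structures $\xi'_i$ satisfying $\mathbf{f}(\partial M^n_1,\partial\xi'_1) = \mathbf{f}(\partial M^n_2, \partial\xi'_2)$. PL-equivalence preserves Euler characteristic, so $\chi(M^n_1) = \chi(M^n_2)$ remains in force. Writing $\Delta_k := f_k(M^n_2,\xi'_2) - f_k(M^n_1,\xi'_1)$, the equal-boundary hypothesis together with Equation~\eqref{Equ:DehnSome_2} forces $\Delta$ to satisfy the homogeneous Dehn--Sommerville system, and $\Delta_{-1} = \tfrac{1}{2}(\chi(M^n_2)-\chi(M^n_1)) = 0$. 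This is exactly the system solved in Lemma~\ref{Lem:Virtual_Move}, so the same argument (invertibility of the square block $(d_{k,i})_{0 \leq k,i \leq [\frac{n-1}{2}]}$) produces integers $m_0,\ldots,m_n$ with $\sum_{i=0}^n m_i \mathbf{d}_i = \Delta$ in $\mathbb{Z}^{n+1}$; set $N := \sum_i |m_i|$.

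To realize these $N$ virtual moves as genuine bistellar moves without touching the boundary, I would mimic the construction in the proof of Lemma~\ref{Lem:Change_f-vector} while restricting the action to the interior. First apply the same number of bistellar $0$-moves to both $(M^n_1,\xi'_1)$ and $(M^n_2,\xi'_2)$ until each has more than $N$ $n$-simplices; a bistellar $0$-move replaces an $n$-simplex by a star subdivision at a new interior vertex, so no $(n-1)$-face of $\partial M^n_i$ is touched, and applying the same count on both sides preserves $\Delta$. Next, choose $N$ $n$-simplices in each manifold and subdivide them into plump cells $K^n$ (Lemma~\ref{Lem:PlumpCell}); since $\partial K^n \cong \partial \Delta^n$, this subdivision again keeps the boundary triangulation untouched and preserves $\Delta$. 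Call the resulting structures $\xi''_i$. Finally, inside each of the $N$ plump cells in $(M^n_1,\xi''_1)$ perform the prescribed bistellar $i$-move, or its reverse $(n-i)$-move when $m_i<0$; property (2) of the plump cell guarantees that these moves fix the star of every vertex on $\partial K^n$, so they change nothing on $\partial M^n_1$, while contributing precisely $m_i \mathbf{d}_i$ to the interior f-vector. This produces $\xi^*_1$ with $\mathbf{f}(M^n_1,\xi^*_1) = \mathbf{f}(M^n_2,\xi''_2)$ and $\partial\xi^*_1 = \partial\xi''_1$; setting $\xi^*_2 := \xi''_2$ concludes.

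The main obstacle, and the reason this stage must be logically separate from Lemma~\ref{Lem:Change_f-vector_2}, is that the boundary triangulation has already been tuned and cannot be disturbed again. The key mechanism that makes this possible is the locality built into the plump cell: all of its bistellar moves happen strictly inside $K^n$ and respect its boundary, so once $K^n$ is inserted inside an $n$-simplex of $M^n_1$, every subsequent interior adjustment is invisible to $\partial M^n_1$. The remaining combinatorial bookkeeping is a copy of the closed-manifold argument in Lemma~\ref{Lem:Change_f-vector}, now carried out with the Dehn--Sommerville system for manifolds with (fixed) boundary in place of the closed one.
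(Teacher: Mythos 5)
Your proposal is correct and takes essentially the same route as the paper: first equalize the boundary f-vectors via Lemma~\ref{Lem:Change_f-vector_2}, then realize the remaining virtual moves by bistellar moves inside plump cells, so the boundary triangulation stays frozen and the interior f-vectors are matched. The only cosmetic difference is that you run the Lemma~\ref{Lem:Virtual_Move} linear algebra directly on the difference vector $\Delta$ (which satisfies the closed-type Dehn--Sommerville relations once the boundary f-vectors agree), whereas the paper phrases the same step through $\widehat{\mathbf{f}}(M^n,\xi)=\mathbf{f}(M^n,\xi)-\tfrac{1}{2}\mathbf{f}(\partial M^n,\partial\xi)$ and Corollary~\ref{Cor:depend_2}; the two formulations are equivalent in this situation.
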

   \begin{proof}
    First of all, since $\chi(\partial M^n_1) = \chi(\partial M^n_2) $,
    by Lemma~\ref{Lem:Change_f-vector_2}, we can
    assume that
     $ \mathbf{f}(\partial M^n_1, \partial \xi_1)
           =\mathbf{f}(\partial M^n_2, \partial \xi_2)$.
    Secondly, $\chi(M^n_1) = \chi (M^n_2)$ and $\chi(\partial M^n_1) =
               \chi(\partial M^n_2) $
   imply that
        \[  f_{-1}(M^n_1, \xi_1)- \frac{1}{2} f_{-1}(\partial M^n_1, \partial\xi_1)
        = f_{-1}(M^n_2, \xi_2)- \frac{1}{2} f_{-1}(\partial M^n_2, \partial\xi_2)
         \]

      \[  \text{Let} \quad \widehat{\mathbf{f}}(M^n_i,\xi_i) :=
             \mathbf{f}(M^n_i, \xi_i)- \frac{1}{2} \mathbf{f}(\partial M^n_i,
             \partial\xi_i), \qquad
             \qquad \]
     \[ \text{and}\quad \widehat{\chi}(M^n_i)=
             f_{-1}(M^n_i, \xi_i)- \frac{1}{2} f_{-1}(\partial M^n_i, \partial\xi_i),
             \quad i=1,2. \]

      Then by Corollary~\ref{Cor:depend_2},
      the parallel statements of Lemma~\ref{Lem:Virtual_Move} and
      Lemma~\ref{Lem:Change_f-vector} still hold if we replace
      \begin{align*}
          \chi(M^n) \ & \longrightarrow \ \widehat{\chi}(M^n_i) \\
          \mathbf{f}(M^n_i,\xi_i) \ & \longrightarrow  \  \widehat{\mathbf{f}}(M^n_i,\xi_i)
       \end{align*}
       Therefore, we can find a combinatorial manifold structure $\xi^*_i$ on
      $M^n_i$ ($i=1,2$) such that
      $(M^n_i, \xi^*_i) $ is PL-homeomorphic to $(M^n_i, \xi_i)$ and
      $\widehat{\mathbf{f}}(M^n_1,\xi^*_1)
       = \widehat{\mathbf{f}}(M^n_2,\xi^*_2) $.
      Notice that the subdivisions and bistellar moves involved
      in our parallel proof for $\widehat{\mathbf{f}}(M^n_i,\xi_i)$ here
       are done in the interior of $M^n_1$ and $M^n_2$.
      So the combinatorial structure $\xi_i$ on $\partial M^n_i$ $(i=1,2)$ is
      not changed, i.e. $(\partial M^n_i, \partial \xi^*_i)= (\partial M^n_i, \partial
      \xi_i)$.
      So we have $ \mathbf{f}(\partial M^n_1, \partial \xi^*_1)
           =\mathbf{f}(\partial M^n_2, \partial \xi^*_2)$, and then
            $\mathbf{f}(M^n_1, \xi^*_1) =\mathbf{f}(M^n_2, \xi^*_2)$.
   \end{proof} \vskip .2cm

   \textbf{Proof of Theorem~\ref{thm:main2}:}
     By the assumptions in the theorem and Lemma~\ref{Lem:Change_f-vector_3} above,
     we can assume  $\mathbf{f}(M^n_1, \xi_1) =\mathbf{f}(M^n_2,\xi_2)$.
     Since the invariant $\Phi$ depends only on the $\mathbf{f}$-vector of a combinatorial manifold,
   then $\Phi(M^n_1,\xi_1) = \Phi(M^n_2,\xi_2)$.
   \hfill $\square$

  \vskip .2cm


\begin{thebibliography}{99}


 \bibitem{Justin02}
   J.~Roberts, \textit{Unusual formulae for the Euler characteristic},
  J. Knot Theory Ramifications \textbf{11} (2002), no. \textbf{5}, 793--796.

 \bibitem{Pa86}
   U.~Pachner, Konstruktionsmethoden und das kombinatorische Hom\"oomorphieproblem
    f\"ur Triangulationen kompakter semilinearer Mannigfaltigkeiten,
   Abh. Math. Sem. Univ. Hamburg \textbf{57} (1986) 69-85.

 \bibitem{Pa91}
  U.~Pachner, \textit{P.L. homeomorphic manifolds are equivalent by elementary
  shellings}, European J. Combin. \textbf{12} (1991), no. \textbf{2}, 129--145.

 \bibitem{Klee64}
 V.~Klee, \textit{A combinatorial analogue of Poincar\'e's duality theorem},
 Canad. J. Math., \textbf{16} (1964), 517--531.

 \bibitem{ChenYan97}
 B.~Chen,; M.~Yan, \textit{Linear conditions on the number of faces of manifolds with
 boundary}, Adv. in Appl. Math. \textbf{19} (1997), no. \textbf{1}, 144--168

 \bibitem{Brond83}
 A.~Brondsted, \textit{An introduction to convex polytopes},
  Graduate Texts in Mathematics, vol. \textbf{90}, Springer-Verlag, New York, 1983

\bibitem{RourSand74}
C.~P.~Rourke and B.~J.~Sanderson, \textit{Introduction to
piecewise-linear topology, 2nd ed.}, Springer-Verlag, Berlin 1982.

\bibitem{YuLi_Euler_1}
 L.~Yu, \textit{Localizable invariants of closed combinatorial
 manifolds and Euler characteristic}, preprint,
 math/arXiv:0903.0699


\end{thebibliography}
\end{document}